\newcommand{\Mdef}[2]{\newcommand{#1}{\relax \ifmmode #2 \else $#2$\fi}}
\newcommand{\tensor}{\otimes}
\newcommand{\sdr}{\rtimes}
\newcommand{\map}{\mathrm{map}}
\newcommand{\Hom}{\mathrm{Hom}}
\newcommand{\Ext}{\mathrm{Ext}}
\Mdef{\bhom}{\mathbf{\hat{H}om}}
\Mdef{\Mod}{\mathrm{mod}}
\newcommand{\st}{\; | \;}
\newtheorem{thm}{Theorem}[section]
\newtheorem{lemma}[thm]{Lemma}
\newtheorem{prop}[thm]{Proposition}
\newtheorem{cor}[thm]{Corollary}
\theoremstyle{definition}
\newtheorem{defn}[thm]{Definition}
\newtheorem{assumption}[thm]{Assumption}
\newtheorem{example}[thm]{Example}
\newtheorem{remark}[thm]{Remark}
\newcommand{\qqed}{\qed \\[1ex]}
\renewenvironment{proof}[1][\hspace*{-.8ex}]{\noindent {\bf Proof #1:\;}}{\qqed}
\Mdef{\PH} {\Phi^H}
\Mdef{\PK} {\Phi^K}
\Mdef{\PL} {\Phi^L}
\Mdef{\PT} {\Phi^{\T}}
\Mdef{\ef}{E{\cF}_+}
\Mdef{\etf}{\widetilde{E}{\cF}}
\Mdef{\eg}{E{G}_+}
\Mdef{\etg}{\tilde{E}{G}}
\Mdef{\infl}{\mathrm{inf}}
\Mdef{\defl}{\mathrm{def}}
\Mdef{\res}{\mathrm{res}}
\Mdef{\ind}{\mathrm{ind}}
\Mdef{\coind}{\mathrm{coind}}
\Mdef{\univ}{\mathcal{U}}
\Mdef{\Fp}{\mathbb{F}_p}
\Mdef{\Zpinfty}{\Z /p^{\infty}}
\Mdef{\Zpadic}{\Z_p^{\wedge}}
\newcommand{\dichotomy}[2]{\left\{ \begin{array}{ll}#1\\#2 \end{array}\right.}
\newcommand{\lra}{\longrightarrow}
\newcommand{\lla}{\longleftarrow}
\newcommand{\spec}{\mathrm{Spec}}
\Mdef{\we}{\mathbf{we}}
\Mdef{\fib}{\mathbf{fib}}
\Mdef{\cof}{\mathbf{cof}}
\Mdef{\BI}{\mathcal{BI}}
\newcommand{\colim}{\mathop{  \mathop{\mathrm {lim}} \limits_\rightarrow} \nolimits}
\newcommand{\hocolim}{\mathop{  \mathop{\mathrm {holim}}\limits_\rightarrow} \nolimits}
\Mdef{\B}{\mathbb{B}}
\Mdef{\C}{\mathbb{C}}
\Mdef{\D}{\mathbb{D}}
\Mdef{\E}{\mathbb{E}}
\Mdef{\T}{\mathbb{T}}
\Mdef{\F}{\mathbb{F}}
\Mdef{\G}{\mathbb{G}}
\Mdef{\I}{\mathbb{I}}
\Mdef{\N}{\mathbb{N}}
\Mdef{\Q}{\mathbb{Q}}
\Mdef{\R}{\mathbb{R}}
\Mdef{\bbS}{\mathbb{S}}
\Mdef{\Z}{\mathbb{Z}}
\Mdef{\bA}{\mathbb{A}}
\Mdef{\bB}{\mathbb{B}}
\Mdef{\bC}{\mathbb{C}}
\Mdef{\bD}{\mathbb{D}}
\Mdef{\bE}{\mathbb{E}}
\Mdef{\bF}{\mathbb{F}}
\Mdef{\bG}{\mathbb{G}}
\Mdef{\bH}{\mathbb{H}}
\Mdef{\bI}{\mathbb{I}}
\Mdef{\bJ}{\mathbb{J}}
\Mdef{\bK}{\mathbb{K}}
\Mdef{\bL}{\mathbb{L}}
\Mdef{\bM}{\mathbb{M}}
\Mdef{\bN}{\mathbb{N}}
\Mdef{\bO}{\mathbb{O}}
\Mdef{\bP}{\mathbb{P}}
\Mdef{\bQ}{\mathbb{Q}}
\Mdef{\bR}{\mathbb{R}}
\Mdef{\bS}{\mathbb{S}}
\Mdef{\bT}{\mathbb{T}}
\Mdef{\bU}{\mathbb{U}}
\Mdef{\bV}{\mathbb{V}}
\Mdef{\bW}{\mathbb{W}}
\Mdef{\bX}{\mathbb{X}}
\Mdef{\bY}{\mathbb{Y}}
\Mdef{\bZ}{\mathbb{Z}}
\Mdef{\cA}{\mathcal{A}}
\Mdef{\cB}{\mathcal{B}}
\Mdef{\cC}{\mathcal{C}}
\Mdef{\mcD}{\mathcal{D}} 
\Mdef{\cE}{\mathcal{E}}
\Mdef{\cF}{\mathcal{F}}
\Mdef{\cG}{\mathcal{G}}
\Mdef{\mcH}{\mathcal{H}} 
\Mdef{\cI}{\mathcal{I}}
\Mdef{\cJ}{\mathcal{J}}
\Mdef{\cK}{\mathcal{K}}
\Mdef{\mcL}{\mathcal{L}}
\Mdef{\cM}{\mathcal{M}}
\Mdef{\cN}{\mathcal{N}}
\Mdef{\cO}{\mathcal{O}}
\Mdef{\cP}{\mathcal{P}}
\Mdef{\cQ}{\mathcal{Q}}
\Mdef{\mcR}{\mathcal{R}}
\Mdef{\cS}{\mathcal{S}}
\Mdef{\cT}{\mathcal{T}}
\Mdef{\cU}{\mathcal{U}}
\Mdef{\cV}{\mathcal{V}}
\Mdef{\cW}{\mathcal{W}}
\Mdef{\cX}{\mathcal{X}}
\Mdef{\cY}{\mathcal{Y}}
\Mdef{\cZ}{\mathcal{Z}}
\Mdef{\ca}{\mathcal{a}}
\Mdef{\ct}{\mathcal{t}}
\Mdef{\At}{\tilde{A}}
\Mdef{\Bt}{\tilde{B}}
\Mdef{\Ct}{\tilde{C}}
\Mdef{\Et}{\tilde{E}}
\Mdef{\Ht}{\tilde{H}}
\Mdef{\Kt}{\tilde{K}}
\Mdef{\Lt}{\tilde{L}}
\Mdef{\Mt}{\tilde{M}}
\Mdef{\Nt}{\tilde{N}}
\Mdef{\Pt}{\tilde{P}}
\Mdef{\tA}{\tilde{A}}
\Mdef{\tB}{\tilde{B}}
\Mdef{\tC}{\tilde{C}}
\Mdef{\tE}{\tilde{E}}
\Mdef{\tH}{\tilde{H}}
\Mdef{\tK}{\tilde{K}}
\Mdef{\tL}{\tilde{L}}
\Mdef{\tM}{\tilde{M}}
\Mdef{\tN}{\tilde{N}}
\Mdef{\tP}{\tilde{P}}
\Mdef{\ft}{\tilde{f}}
\Mdef{\xt}{\tilde{x}}
\Mdef{\yt}{\tilde{y}}
\Mdef{\Ab}{\overline{A}}
\Mdef{\Bb}{\overline{B}}
\Mdef{\Cb}{\overline{C}}
\Mdef{\Eb}{\overline{E}}
\Mdef{\Fb}{\overline{F}}
\Mdef{\Gb}{\overline{G}}
\Mdef{\Hb}{\overline{H}}
\Mdef{\Ib}{\overline{I}}
\Mdef{\Jb}{\overline{J}}
\Mdef{\Kb}{\overline{K}}
\Mdef{\Lb}{\overline{L}}
\Mdef{\Mb}{\overline{M}}
\Mdef{\Nb}{\overline{N}}
\Mdef{\Ob}{\overline{O}}
\Mdef{\Pb}{\overline{P}}
\Mdef{\Qb}{\overline{Q}}
\Mdef{\Rb}{\overline{R}}
\Mdef{\Sb}{\overline{S}}
\Mdef{\Tb}{\overline{T}}
\Mdef{\Ub}{\overline{U}}
\Mdef{\Vb}{\overline{V}}
\Mdef{\Wb}{\overline{W}}
\Mdef{\Xb}{\overline{X}}
\Mdef{\Yb}{\overline{Y}}
\Mdef{\Zb}{\overline{Z}}
\Mdef{\db}{\overline{d}}
\Mdef{\hb}{\overline{h}}
\Mdef{\qb}{\overline{q}}
\Mdef{\rb}{\overline{r}}
\Mdef{\tb}{\overline{t}}
\Mdef{\ub}{\overline{u}}
\Mdef{\vb}{\overline{v}}
\Mdef{\hc}{\hat{c}}
\Mdef{\he}{\hat{e}}
\Mdef{\hf}{\hat{f}}
\Mdef{\hA}{\hat{A}}
\Mdef{\hH}{\hat{H}}
\Mdef{\hJ}{\hat{J}}
\Mdef{\hM}{\hat{M}}
\Mdef{\hP}{\hat{P}}
\Mdef{\hQ}{\hat{Q}}
\Mdef{\thetab}{\overline{\theta}}
\Mdef{\phib}{\overline{\phi}}
\Mdef{\uA}{\underline{A}}
\Mdef{\uB}{\underline{B}}
\Mdef{\uC}{\underline{C}}
\Mdef{\uD}{\underline{D}}
\Mdef{\bolda}{\mathbf{a}}
\Mdef{\boldb}{\mathbf{b}}
\Mdef{\bfD}{\mathbf{D}}
\Mdef{\fm}{\frak{m}}
\Mdef{\fp}{\frak{p}}
\Mdef{\eps}{\epsilon}
\newcommand{\cell}{\mathrm{Cell}}
\renewcommand{\cC}{\mathcal{C}}
\newcommand{\sfD}{\mathsf{D}}
\newcommand{\Db}{\sfD^b}
\newcommand{\Dsg}{\sfD_{sg}}
\newcommand{\Dcsg}{\sfD_{csg}}
\newcommand{\CBG}{C^*(BG)}
\newcommand{\ChatBG}{\hat{C}^*(BG)}
\newcommand{\HBG}{H^*(BG)}
\newcommand{\COBG}{C_*(\Omega BG_p)}
\newcommand{\HOBG}{H_*(\Omega BG_p)}
\newcommand{\CCOBG}{C^*(\Omega BG_p)}
\newcommand{\TCOBG}{L_kC_*(\Omega BG_p)}
\newcommand{\CBU}{C^*(BU)}
\newcommand{\COBU}{C_*(U_p)}
\newcommand{\COX}{C_*(\Omega X)}
\newcommand{\HOX}{H_*(\Omega X)}
\newcommand{\TCOX}{L_kC_*(\Omega X)}
\newcommand{\COF}{C_*(\Omega F)}
\newcommand{\TCOF}{L_k C_*(\Omega F)}
\newcommand{\COUG}{C_*(\Omega U/G_p)}
\newcommand{\TCOUG}{L_kC_*(\Omega U/G_p)}
\newcommand{\Ho}{\mathrm{Ho}}
\newcommand{\OBG}{\Omega BG_p}
\newcommand{\modules}{\mbox{-mod}}
\newcommand{\nubar}{\overline{\nu}}
\newcommand{\HO}{H^{\Omega}}
\newcommand{\HOC}{H_{\Omega}}
\newcommand{\HOT}{\hat{H}^{\Omega}}
\newcommand{\tauz}{\tau_{\geq 0}}
\newcommand{\taus}{\tau_{\geq s}}
\newcommand{\Loc}{\mathrm{Loc}}
\newcommand{\Perm}{\mathrm{Perm}}
\newcommand{\DPerm}{\sfD (\Perm (G,k))}
\begin{document}
\title{The singularity category and duality for  complete intersection
  groups}
\author{J.P.C.Greenlees}
\address{Mathematics Institute, Zeeman Building, Coventry CV4, 7AL, UK}
\email{john.greenlees@warwick.ac.uk}

\date{}

\begin{abstract}
If $G$ is a finite group,  the structure of the modular representation
theory depends on the cochains $C^*(BG; k)$, viewed as a commutative
ring spectrum. We consider here its singularity category (in the sense
of the author and Stevenson \cite{Dsg}) and show that  the
singularity category is the bounded derived category of the
$\Omega$-Tate spectrum (the $k$-nullification of the Koszul dual connective ring
spectrum $\COBG$). We establish a form of
Gorenstein duality for $\COBG$, and show that the $\Omega$-Tate
spectrum enjoys a form of Tate duality. Under a  complete
intersection hypothesis we give a method for calculating the
$\Omega$-Tate homology
\end{abstract}

\thanks{   The author is grateful to Dave Benson for conversations about
  this work, especially Lemma \ref{lem:Opzero}. This research was supported by EPSRC grant EP/W036320/1.   The author
  would also  like to thank the Isaac Newton   Institute for
  Mathematical Sciences, Cambridge, for support and   hospitality during the programme Equivariant Homotopy Theory in  Context, where later parts of  work on this paper was undertaken. This work was supported by EPSRC grant EP/Z000580/1.  } 
\maketitle

\setcounter{tocdepth}{1}
\tableofcontents

\section{Introduction}

This paper is a contribution to the  overarching aim of describing a hierarchy of behaviour for finite
groups $G$ from the point of view of commutative algebra (we also
comment on compact Lie groups $G$). 
We recall the context in the rest of the introduction, but
highlight here that the novelty here is based around 
the $\Omega$-Tate homology. We show that it
 satisfies a form  of Tate duality and that it 
 captures the singularity category of $C^*(BG)$. Finally, if the group
 satisfies a complete intersection condition (for example Chevalley groups at
good primes),  we give methods for calculating the $\Omega$-Tate homology.

\subsection{The enhanced group cohomology ring}
Many structural features of the representation theory of a finite group $G$
over a field $k$ of characteristic $p$ are reflected in the cohomology ring
$H^*(BG;k)=\Ext_{kG}^*(k,k)$, starting with Quillen's theorem that the
Krull dimension is the $p$-rank of $G$. This is a Noetherian ring (Venkov) and  
very special structurally: for example if it is Cohen-Macaulay, it is
automatically Gorenstein (Benson-Carlson \cite{BC}). However the structural
features are more clearly reflected if we consider an enrichment: we
consider the cochains $C^*(BG)=C^*(BG;k)$ rather than the cohomology
ring  $H^*(BG)=\pi_*(C^*(BG))$. For many purposes  it is
enough to consider it as an $A_\infty$-ring, which is familiar in
algebraic contexts, but in fact we may take $C^*(BG)=\map (BG, Hk)$ to
be the spectrum of maps from $BG$ into the Eilenberg-Maclane spectrum
$Hk$. As such it is an $E_{\infty}$-ring, and we may capture relevant
structures by working in a symmetric monoidal category of spectra in
which it is a commutative ring.

\subsection{The spectrum of behaviour}
A massive benefit of working with cochains is that $C^*(BG)$ is
Gorenstein \cite[Subsection 10.3]{DGI} for all finite groups $G$ without exception. At
the other extreme, following Auslander, Buchsbaum and Serre in
classical commutative algebra, one may define regular
local rings in a homotopy invariant way, and it turns out that $C^*(BG)$ is regular if
and only if $G$ is $p$-nilpotent. 

It is then natural to consider the
spread of behaviour on the spectrum between 
Gorenstein and regular, and to use the singularity category to place
groups along the range. 

Results of \cite{Dsg} (explained below) allow the apparatus of the
singularity category to be applied for $\CBG$. Some specific
calculations have been made in \cite{Dsgpq, BensonDsg}. 
 In these cases it was possible to calculate all coefficient rings and 
 to give small and explicit algebraic models, but we cannot expect to be explicit in 
 general. In the present paper we develop some  structural and
 homotopy invariant methods we can apply more generally.

In particular, we prove that the singularity category is
 the bounded derived category of  the $\Omega$-Tate ring
 spectrum (Definition \ref{defn:OmegaTate}). When  $C^*(BG)$ is a complete intersection
in a suitable sense, we give methods of calculation.

\subsection{Koszul duality}
Morita theory allows us to take a 
$kG$-module $M$ and obtain the module $C^*(BG; M):=\Hom_{kG}(k, M)$
over the Koszul dual ring 
$(kG)^!:=\Hom_{kG}(k,k)\simeq C^*(BG)$. One might hope this is one direction
of  a Morita equivalence, but the situation is a little more complicated: if we attempt to return to
$kG$-modules we obtain an action of the ring $\cE$ of 
$C^*(BG)$-endomorphisms  of $k$. 
The Eilenberg-Moore spectral sequence arises from an equivalence
$$\Hom_{C^*X} (k,k)\simeq C_*(\Omega X), $$
provided $X$ is connected, $p$-complete and $\pi_1(X)$ is a finite
$p$-group \cite{DwyerStrong}. Since
the Bousfield-Kan $p$-completion 
$BG\lra (BG)_p^{\wedge}$ induces an isomorphism in $H^*(\cdot ; k)$,
we see
$$\cE=\CBG^!=\Hom_{C^*(BG)}(k,k)\simeq C_*(\Omega (BG_p^{\wedge})). $$
For brevity we write $\COBG=C_*(\Omega (BG_p^{\wedge}))$ from now on. 
The point is that $\COBG=((kG)^!)^!$ is the double Koszul dual of $kG$, so we
have a double-centralizer completion map  $kG\lra \COBG$. This is an
equivalence if $G$ is a $p$-group, but generally very far from it. For
example if $G$ is not $p$-nilpotent,
the homology ring $\HOBG$ is not finite dimensional. 

\subsection{Morita equivalence}
\label{subsec:Morita}
The advantage of working with $\COBG$ is that we do get a precise
Morita equivalence between appropriate categories of 
$\CBG$-modules and $\COBG$-modules. To establish the context, we must
establish a good theory of `finitely generated' modules. We explain
this in more detail below, but there is a quick definition sufficient
to let us to state our main results. We may say that a $\CBG$-module $M$ is
{\em finitely generated} if $\pi_*(M)$ is finitely generated over
$\pi_*(\CBG)=\HBG$, and define the bounded derived
category to be the homotopy category of modules with homotopy finitely
generated over $\HBG$, 
$$\Db (\CBG)=\Ho (\{ M \st \pi_*(M) \mbox{ is finitely generated over }
H^*(BG) \} )$$
It is shown in \cite{Dsg} that this has good formal properties, and that there is a Morita equivalence
$$\Db (\CBG)\simeq \Db (\COBG). $$
Accordingly, we can move back and forth between $\CBG$-modules and
$\COBG$-modules, which  makes precise what one learns about $kG$-modules by
considering $\CBG$-modules.

\subsection{Gorenstein duality}
We have already highlighted the Gorenstein duality property for
$\CBG$. This is closely related to the fact that there are two
approaches to Tate duality: one which splices together homology and
cohomology and one which kills finite free spectra. The fact that
these two give the same answer is important: it is the Anderson
self-duality of the Tate spectrum. In group cohomology,  this is often
called {\em Tate duality}. 

It is known that the noncommutative ring $\COBG$ is Gorenstein, and
in Theorem \ref{thm:GorDCOBG} we formulate and prove the corresponding  Gorenstein {\em
  duality} statement. 

This in turn is sufficient to allow us to define the $\Omega$-Tate ring
spectrum, and give a description of the singularity category for 
$\CBG$ as the bounded derived category of the $\Omega$-Tate 
spectrum (Corollary \ref{cor:DsgisDb}). 

\subsection{Complete intersections}
Finally, under a complete intersection hypotheses, the duality properties for 
$\COBG$ can be formulated as a local cohomology theorem, 
showing that $\HOBG$ is a very special ring (Corollary
\ref{cor:HHciiswc} and Section \ref{sec:examples}).

To explain the assumption, recall that in the commutative
algebra of Noetherian rings Gulliksen has
shown that ci rings are precisely those for which the Ext algebra
$\Ext_R^*(k,k)$ has polynomial growth. Without an 
assumption of that type, the ring $\HOBG\simeq
\pi_*(\Hom_{\CBG}(k,k))$ has no hope of good Noetherian
behaviour. Accordingly it is entirely reasonable to make a ci assumption. It
was shown in \cite{pzci} that there is a range of different ways to
transpose the ci assumption to ring spectra like $\CBG$, and we will
assume a form of the ci condition. Section \ref{sec:centralred} 
shows that under this assumption,
the localization can be constructed using  a stable Koszul complex
 inverting central elements, which gives a convenient
method of calculation. 

\subsection{Organization}
In Section \ref{sec:HICA} we describe the general context of homotopy
invariant commutative algebra, and in Section \ref{sec:normsymm}  we 
focus in on the symmetric Gorenstein
context obtained from a faithful representation of the group in a
connected compact Lie group.
In Section \ref{sec:Tate} we introduce the $\Omega$-Tate spectrum as a
Bousfield localization, and in Section \ref{sec:AndOmega} we show that it
enjoys Anderson duality for $k$-orientable groups. In Section
\ref{sec:nu} we consider the norm map and determine it for all
finite groups. In Section \ref{sec:squeeze} we give an alternative
approach through Benson's squeezed resolutions, hence determining
$\Omega$-Tate homology in purely algebraic terms.
In Section \ref{sec:sing} we recall the definition of the  singularity category of
$\CBG$, and in Section \ref{sec:modovertate} we study modules over the
$\Omega$-Tate spectrum and show that the singularity category of
$\CBG$ is the bounded derived category of the $\Omega$-Tate spectrum.
In Section \ref{sec:centralred} we show that under complete
intersection hypotheses we may calculate the $\Omega$-Tate homology by
a Cech complex. Finally, in Section \ref{sec:examples} we mention a
few examples of groups to which this applies. 

\section{Homotopy invariant commutative algebra and  Morita
  equivalence}
\label{sec:HICA}
The motivation for our methods comes from classical commutative algebra with the study 
of a Noetherian local ring with residue field $k$. In this section we recall some
basic homotopy invariant definitions and how they apply to our examples.

We note the recurrent theme that it is sometimes best to look at modules over
the commutative algebra $\CBG$ and sometimes best to look at modules
over  $\COBG$. 

\subsection{Building}
We recall some useful language from \cite{DGI}.

Given an object $M$ in a triangulated category, we say $M$ {\em finitely
  builds} $N$ if $N$ can be constructed in finitely many steps from
$M$ by taking mapping cones, suspensions and retracts  (in other
words, if $N$ is in the thick subcategory generated by $M$). We say
that $M$ {\em builds} $N$ if arbitrary coproducts are permitted (in
other words, if $N$ is in the localizing subcategory generated by $M$).

\subsection{Regularity}
By results of Auslander, Buchsbaum and Serre, a Noetherian commutative local ring
is regular if and only if $k$ is small as an $R$-module, or
equivalently that $\Ext_R^*(k,k)$ is finite dimensional.   For a ring
spectrum $R$ with a map $R\lra k$ we {\em define} $R$ to be regular if
$k$ is small over $R$.  This is again equivalent to
$\pi_*\cE=\pi_*(\Hom_R(k,k))$ being finite dimensional. 

Thus  $\CBG$ is regular if and only if $\pi_*\COBG=\HOBG$ is finite
dimensional, and this happens if and only if $G$ is $p$-nilpotent
\cite[Theorem 7.3]{BGgeneration}.

\subsection{Proxy regularity}
Of course regularity is a very restrictive condition, and we need a more
inclusive finiteness condition to play the role of the Noetherian
condition.  In classical commutative algebra, we may choose generators of
the maximal ideal and consider the Koszul complex $K$. The existence
of such a complex is the finiteness condition we need. 

For ring spectra  with a map $R\lra k$ with $k$ a field, and we will require the 
finiteness hypothesis that $R$ is {\em proxy-regular} in the sense \cite[4.14]{DGI} that 
there is a small $R$-module $K$ finitely built by $k$ and so that $k$
is built by $K$. 

It is shown in \cite[Subsection 5.7]{DGI} that $\CBG$ is proxy regular
if $G$ is any finite or compact Lie group, and by
\cite[4.17]{DGI} this implies $\COBG$ is proxy regular. The ring 
$C_*(G)$ (the group ring if $G$ is finite) is proxy regular by \cite[5.9]{DGI}.

\subsection{The Gorenstein condition}
We first consider the Gorenstein condition. In classical commutative
algebra there are a number of different characterisations of
Gorenstein local rings, one of which is the condition that $\Ext_R^*(k,R)$ is
1-dimensional.

\begin{defn} \cite[8.1]{DGI} A proxy-regular augmented ring spectrum $R\lra
  k$ is said to be {\em Gorenstein} of shift $a$ if there is an equivalence of
  $R$-modules  $\Hom_R(k, R)\simeq \Sigma^a k$. 
\end{defn}
 
It is shown in \cite[Subsection 10.3]{DGI} that for a finite group
$G$, the ring $\CBG$ is Gorenstein of shift 0, and it follows
from the Morita invariance statement \cite[8.5]{DGI} that $\COBG$ is
also Gorenstein of shift 0.

If $G$ is a compact Lie group with $k$-orientable adjoint
representation (i.e., if the conjugation action of $G$ on the tangent
space at the identity preserves orientation), then the rings $C^*(BG)$
are again Gorenstein of shift equal to the dimension of $G$
\cite[Subsection 10.3]{DGI}.

\subsection{Effective constructibility}
\label{subsec:effcon}
The power of Koszul duality is that it gives very organized
constructions of $k$-cellularization. This is based on the observation
that  for any $R$-module $M$, $\Hom_R(k,M)$ is a $\cE$-module and hence built from $\cE$ and
hence $\Hom_R(k,M)\tensor_{\cE}k$ is built from $k$.

\begin{defn} \cite[4.3]{DGI}
  We say $k$-cellularization is {\em effectively constructible} if the
  evaluation map
  $$\Hom_R(k,M)\tensor_{\cE}k \lra M$$
  is the $k$-cellularization for all $R$-modules $M$.
\end{defn}

It is shown in \cite[4.10]{DGI} that if $R$ is proxy-regular then the
cellularization is effectively constructible.

\subsection{Gorenstein duality}
\label{subsec:GorD}
If $R$ is a  $k$-algebra we may form the Brown-Comenetz dual
$R^{\vee}=\Hom_R(k, R)$, which obviously has the Matlis lifting property
$$\Hom_R(T, R^{\vee})=\Hom_k(T,k). $$

Thus if $R\lra k$ is a $k$-algebra which is  Gorenstein of shift $a$,
we  have equivalences
$$\Hom_R(k,R)\simeq \Sigma^ak \simeq \Hom_R(k, \Sigma^a R^{\vee}). $$
A priori this is only an equivalence of $R$-modules,  but if $\cE$ has
a unique action on $k$, it is an equivalence of $\cE$-modules and we
may apply $\tensor_{\cE}k$ to deduce
$$\cell_kR\simeq \Sigma^a \cell_k(R^{\vee}). $$
The trivial action condition for $\CBG$ and for $\COBG$ is automatic
for finite  groups $G$ (since
$\pi_1(BG_p)$ is a finite $p$-group that can only act trivially on
$k$). For a compact Lie group, the action is given by the action of
$\pi_1(BG_p)$ on $H^d(S^{ad(G)};k)$; if this  is trivial, we say that  the adjoint
representation is $k$-orientable (this is automatic if $G$ is finite
or connected or if $k$ is of characteristic 2). 

Finally, if $R$ is connective or coconnective with $R_0=k$ a field, we
see that $R^{\vee}$ is already $k$-cellular, so that we have the
Gorenstein duality statement
$$\cell_kR\simeq \Sigma^aR^{\vee}. $$
Under complete intersection  hypotheses we will also give an algebraic description of
$\cell_kR$ putting the Gorenstein duality into the form of a local
cohomology spectral sequence \cite{ringlct}.

\section{Normalizations and the symmetric Gorenstein context}
\label{sec:normsymm}
In this section we recall from \cite{Dsg} the apparatus for defining
and working with $\CBG$.

\subsection{Normalization}
In commutative algebra, a Noether normalization of $R$ is a regular
subring over which $R$ is finitely generated as a module. There is a
convenient counterpart to this in our context. 

For any finite group $G$ we may choose a faithful 
representation $\rho : G\lra U$ into a connected compact Lie group
$U$ (such as the unitary group $U=U(n)$). This gives a map   $q =\rho^*: \CBU \lra \CBG$ of ring
spectra. 

  Since $U$ is connected $\CBU$ is  regular in the sense that $H_*(\Omega 
  (BU_p))$ is finite dimensional, because   $\Omega (BU_p)\simeq (\Omega BU)_p\simeq U_p$.
   The finite generation statement
  corresponds to the fact that the cofibre $\CBG\tensor_{\CBU} k\simeq C^*(U/G)$ is 
finitely built from $k$.  Accordingly we consider $q$ to be 
 a {\em normalization} of $\CBG$.

\subsection{Finitely generated modules}
For regular local rings, finite generation is equivalent to smallness,
so  we may reasonably say  that $\CBU$-modules are fg precisely if
they are small. 

\begin{defn} \cite{Dsg} A $\CBG$-module $M$ is {\em finitely generated (fg)} relative to $\rho$ if 
$q^*M$ is small over $\CBU$. The {\em bounded derived category}
relative to $\rho$  is the full triangulated subcategory  $\Db (\CBG)$ of fg modules. 
\end{defn}

This is independent of the choice of $U$ and $\rho$ and has the very
concrete characterisation that we used in the introduction. 

\begin{thm}\cite[7.5]{Dsg}
$$\Db (\CBG)=\{ M \st \pi_*M \mbox{ is finitely generated over } \HBG \}$$
\end{thm}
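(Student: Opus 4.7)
The plan rests on the Venkov-type theorem that $\HBG$ is module-finite over the polynomial ring $\HBU = \pi_*(\CBU)$ via $\theta^* = \rho^*$: this is what makes a faithful representation a Noether normalisation in cohomology. The strategy is to convert ``finitely generated over $\HBG$'' to ``finitely generated over $\HBU$'' and then to exploit the regularity of the polynomial ring.

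For the forward inclusion $\Db(\CBG) \subseteq \{M : \pi_*M \text{ f.g.\ over } \HBG\}$, observe that the class of $\CBU$-modules $N$ whose homotopy $\pi_*N$ is finitely generated over $\HBU$ forms a thick subcategory: it is closed under suspension and retracts, and closure under cofibre sequences follows from the Noetherianity of $\HBU$ applied to the long exact sequence. Since this class contains $\CBU$ itself, it contains every small $\CBU$-module. Thus if $\theta^*M$ is $\CBU$-small, then $\pi_*M$ is finitely generated over $\HBU$, and any finite $\HBU$-generating set also generates over $\HBG$ via $\theta^*$.

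For the reverse inclusion, suppose $\pi_*M$ has $\HBG$-generators $m_1, \dots, m_s$. If $g_1, \dots, g_t$ generate $\HBG$ over $\HBU$ (Venkov), then the products $g_j m_i$ generate $\pi_*M$ over $\HBU$. It therefore remains to show that any $\CBU$-module $N$ with $\pi_*N$ finitely generated over $\HBU$ is small over $\CBU$. This is the principal obstacle. The inputs are: $\HBU$ is polynomial on finitely many even-degree generators, hence regular of global dimension $n = \dim \HBU$; and mapping out of a finite wedge of suspensions of $\CBU$ is computed in $\pi_*$ by $\Hom_{\HBU}$, so algebraic generators over $\HBU$ can be realised by honest $\CBU$-module maps.

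I would build $N$ in at most $n + 1$ stages. At each stage, choose $\HBU$-generators for the current $\pi_*$, realise them by a map $\bigvee_i \Sigma^{d_i} \CBU \to N$ that is surjective on $\pi_*$, and pass to the fibre. Noetherianity ensures each successive $\pi_*$ remains finitely generated, and since fibre homotopy is the syzygy module, its projective dimension over $\HBU$ drops by one at each stage. After at most $n$ iterations the remaining fibre has projective dimension zero, i.e.\ is a finite wedge of suspensions of $\CBU$. Reassembling the tower presents $N$ as a finite iterated cofibre of finite wedges of suspensions of $\CBU$, hence as a small $\CBU$-module. Therefore $\theta^*M$ is small, so $M \in \Db(\CBG)$.
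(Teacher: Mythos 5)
Your argument is correct and matches the standard proof: the paper itself states this result only as a citation of \cite[7.5]{Dsg}, and the argument there runs exactly along your lines --- transfer finite generation over $\HBG$ to finite generation over $\HBU$ via the Noether normalisation $\theta$ (Venkov), then use Noetherianity and the finite graded global dimension of the polynomial ring $\HBU$ to show by a finite syzygy-killing tower (free covers realised by maps from finite wedges of suspensions of $\CBU$, ending when the syzygy is free) that the $\CBU$-modules with finitely generated homotopy are precisely the small ones. No gaps; nothing further is needed.
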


\subsection{The symmetric Gorenstein context}
\label{subsec:symmGor}

The representation $\rho$ induces a fibration 
$$U/G\lra BG\lra BU, $$
which remains a fibration after $p$-completion and hence we have a 
cofibre sequence 
$$\xymatrix{
Q\ar@{=}[d]&R\ar@{=}[d]&S\ar@{=}[d]\\
C^*(U/G) &C^*(BG)\lto &C^*(BU)\lto^{q} 
}$$
of $k$-algebras and an associated cofibre sequence 
$$\xymatrix{
\mcD\ar@{=}[d]&\cE\ar@{=}[d]&\cF \ar@{=}[d]\\
C_*(\Omega((U/G)_p))
\rto^\phi &C_*(\Omega (BG_p))\rto &C_*(\Omega (BU_p)) 
}$$
of $k$-algebras. We note that $C_*U\simeq C_*(\Omega BU_p)$ and we will
abbreviate $\COUG=C_*(\Omega((U/G)_p))$. 

The map $q: S\lra R$  is a normalization since $S=\CBU$ is 
regular (as $U$ is mod $p$ finite) and  $U/G$ is mod $p$-finite.
The map $\phi: \mcD \lra \cE$ is a normalization since $\mcD=\COUG$ is 
regular (as $U/G$ is mod $p$ finite) and  $U$ is mod $p$-finite. 

We say that a $\COBG$-module $X$ is finitely generated (fg) 
relative to $\rho$ if $\phi^*X$ is small over $\COUG$.
These two notions of finite generation correspond under Koszul duality.

\begin{thm}\cite[9.1]{Dsg}
Koszul duality induces  an equivalence
$$\Db (\CBG)\simeq \Db (\COBG)$$
between bounded derived categories of $\CBG$ and 
$\COBG$.
\end{thm}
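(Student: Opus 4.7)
The plan is to prove the equivalence by setting up the Koszul duality adjunction
\[
\Psi = (-)\tensor_\cE k \;\colon\; \cE\modules \;\rightleftarrows\; R\modules \;\colon\; \Hom_R(k,-)=\Phi,
\]
with $R = \CBG$ and $\cE = \COBG$, and showing that it restricts to inverse equivalences between the bounded derived categories defined via the normalizations $\theta\colon S = \CBU \to R$ and $\phi\colon \mcD = \COUG \to \cE$. The two steps are to check that $\Phi$ and $\Psi$ preserve the respective finiteness conditions, and that they are quasi-inverse on the $\Db$'s.

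For the preservation step: given $M$ with $\theta^*M$ small over the regular ring $S$, I need $\phi^*\Phi(M)$ to be a small $\mcD$-module. Factoring the augmentation $R\to k$ through the cofibre $Q = R\tensor_S k \simeq C^*(U/G)$ and using $\theta_!\dashv\theta^*$ gives
\[
\Hom_R(k,M)\simeq \Hom_Q\!\bigl(k,\Hom_R(Q,M)\bigr)\simeq \Hom_Q\!\bigl(k,\Hom_S(k,\theta^*M)\bigr),
\]
which exhibits the $\mcD$-action through $\mcD \simeq \Hom_Q(k,k)$. Smallness of $\theta^*M$ over the regular ring $S$ makes $\Hom_S(k,\theta^*M)$ a small $\cF$-module by classical Koszul duality for $S$; applying $\Hom_Q(k,-)$ and using Koszul duality for the proxy-regular cochain algebra $Q$ on the mod-$p$ finite complex $U/G$ then produces the required small $\mcD$-module. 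A symmetric argument shows $\Psi(\Db(\cE))\subseteq\Db(R)$.

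For the equivalence, proxy-regularity together with effective constructibility (recalled in Section~2) gives that the counit $\Psi\Phi(M)\to M$ is the $k$-cellularisation $\cell_k M$, and dually for the unit, so $\Phi$ and $\Psi$ are already inverse on the $k$-cellular modules on each side. To get an equivalence on all of $\Db$---which contains non-cellular objects such as $R$ itself, since $\cell_k R\simeq\Sigma^a k$ by the Gorenstein duality of $\CBG$---I would argue that $\Db(R)$ is generated as a thick subcategory by $\{R,k\}$ and $\Db(\cE)$ by $\{\cE,k\}$, and check directly on generators that the appropriate comparison maps are equivalences. The key input is the double-Koszul-duality compatibility of the two cofibre sequences $Q\leftarrow R\leftarrow S$ and $\mcD\to\cE\to\cF$, which transports the generating pairs correctly.

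The main obstacle is this last verification: one genuinely needs to extend the Koszul duality equivalence beyond the $k$-cellular subcategory to the full bounded derived category. The argument relies on the structural compatibility of the two normalization cofibre sequences as double Koszul duals of each other, which is underwritten by the proxy-regularity of all the rings involved and the double-centralizer property available in the $p$-complete setting (as in \cite{DwyerStrong}). Once this compatibility is in hand, combining it with the cellular equivalence yields the Morita equivalence $\Db(\CBG)\simeq\Db(\COBG)$.
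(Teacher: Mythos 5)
Your first step --- that $\Hom_R(k,-)$ preserves the finiteness conditions, via $\Hom_R(k,M)\simeq\Hom_Q(k,\Hom_S(k,\theta^*M))$ and the normalization square --- is in the right spirit (though note that smallness of $\theta^*M$ over $S$ makes $\Hom_S(k,\theta^*M)$ \emph{finitely built from $k$} over $\cF$, not small; it is the subsequent application of $\Hom_Q(k,-)$ that yields a small $\mcD$-module). The gap is in the second step: the pair $\Psi=(-)\tensor_{\cE}k$, $\Phi=\Hom_R(k,-)$ cannot be quasi-inverse on the bounded derived categories. They are adjoint, and by effective constructibility the counit $\Phi(M)\tensor_{\cE}k\to M$ is precisely the $k$-cellularization, so it is an equivalence only on $k$-cellular objects, whereas $\Db(\CBG)$ consists mostly of non-cellular ones. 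Concretely, at the very generator you propose to check, $M=R=\CBG$, the Gorenstein property gives $\Phi(R)\simeq\Sigma^a k$, hence $\Psi\Phi(R)\simeq\Sigma^a k\tensor_{\cE}k\simeq\Sigma^a C_*(BG)\simeq\Gamma_k\CBG$, and the comparison map $\Gamma_k\CBG\to\CBG$ has cofibre the Tate construction $\hat{C}^*(BG)$, which is nonzero whenever $p$ divides $|G|$. (Relatedly, ``$\cell_kR\simeq\Sigma^a k$'' conflates the Gorenstein \emph{condition} $\Hom_R(k,R)\simeq\Sigma^a k$ with Gorenstein \emph{duality}, which gives $\cell_kR\simeq\Sigma^a C_*(BG)$.) The equivalence of \cite[9.1]{Dsg} --- which the present paper quotes rather than proves --- is implemented by $\Hom_{\CBG}(k,-)$ in one direction and by the second Koszul duality functor $\Hom_{\COBG}(k,-)$ in the other, using the double-centralizer equivalence $\Hom_{\COBG}(k,k)\simeq\CBG$; the substantive point missing from your outline is that the resulting composite (a derived completion) is an equivalence on modules whose homotopy is finitely generated over $\HBG$, and correspondingly on fg $\COBG$-modules.

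Your proposed repair --- that $\Db(\CBG)=\thick(R,k)$ and $\Db(\COBG)=\thick(\cE,k)$ --- is also not available. Already for $G=(\Z/2)^2$ at $p=2$ one has $\COBG\simeq kG$, and $\Db(kG)=\thick(kG,k)$ would force $\Dsg(kG)$ to be generated as a thick subcategory by the image of $k$; by Theorem \ref{thm:BGG} together with Serre's theorem that image corresponds to the structure sheaf, and $\thick(\mathcal{O})\neq \Db(\mathrm{coh}\,\mathbb{P}^{1})$ since maps between sums of shifts of $\mathcal{O}$ see neither $\mathcal{O}(1)$ nor skyscrapers. Even where such generation does hold, the verification ``comparison maps are equivalences on generators'' fails at $R$ itself, as above. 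So the strategy as written does not close: one genuinely needs the Hom--Hom form of Koszul duality together with a completeness argument for finitely generated modules, not the $((-)\tensor_{\cE}k,\Hom_R(k,-))$ adjunction.
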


\section{The Tate localization}
\label{sec:Tate}
In this section we introduce the $\Omega$-Tate spectrum, the 
fundamental object of this paper.  The definition as a Bousfield
localization is formal. Later we will establish good behaviour under  finiteness
conditions.

\subsection{Tate localizations}
When $R=\CBG$ or $\COBG$, the field $k$ is an $R$-module, so we may
consider the Bousfield localization which nullifies the localizing subcategory generated by
$k$. The localization $M\lra L_kM$ is characterised in the homotopy category by the fact that
$[k,L_kM]=0$ and the the mapping fibre is built from $k$.  We will
write
$$\Gamma_k M \lra M \lra L_kM $$
for the associated triangle.  Thus $L_kM$ is the localization of $M$
{\em away from} $k$, and $\Gamma_k M \lra M$ is the
$k$-cellularization of $M$. The functor $L_k$ is monoidal and therefore takes ring spectra to ring
spectra.

One of the powerful features of these localizations is invariance 
under change of base ring. We record it here since we will make 
repeated use of it, both here and in the non-commutative case. 

\begin{lemma}
  \label{lem:cob}
  If $k$ is an $R$-module and we are given a ring map $\phi: S\lra R$, 
  then for any $R$-module $M$, $\phi^*L_k M\simeq 
  L_k\phi^*M$, and similarly for $\Gamma_k$. 
  \end{lemma}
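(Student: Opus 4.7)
My plan is to invoke uniqueness of the $k$-nullification triangle. The restriction functor $\phi^*$ has a left adjoint $R \otimes_S (-)$ and a right adjoint $\mathrm{Hom}_S(R,-)$, so it is exact and preserves arbitrary coproducts and retracts. Applying it to the triangle
\[
\Gamma_k M \lra M \lra L_k M
\]
of $R$-modules therefore yields a cofibre sequence
\[
\phi^* \Gamma_k M \lra \phi^* M \lra \phi^* L_k M
\]
of $S$-modules. To identify this with the $k$-nullification triangle of $\phi^* M$, I need only check that $\phi^* \Gamma_k M$ lies in $\loc_S(k)$ and that $\phi^* L_k M$ is $k$-null in $S$-modules; uniqueness then gives the asserted equivalences for both $\Gamma_k$ and $L_k$ simultaneously.

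The first condition is formal. Because $\phi^*$ preserves coproducts, cofibres and retracts, and because $\phi^* k = k$, it carries $\loc_R(k)$ into $\loc_S(k)$; in particular $\phi^* \Gamma_k M \in \loc_S(k)$.

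The second condition is the main step and where I expect the real obstacle to sit. Using the $(R\otimes_S-) \dashv \phi^*$ adjunction,
\[
[k, \phi^* L_k M]_S \iso [R \otimes_S k, L_k M]_R,
\]
and the right-hand side vanishes precisely when $R \otimes_S k$ belongs to $\loc_R(k)$, since $L_k M$ is $k$-null over $R$. So the real content of the lemma is that the change-of-rings object $R \otimes_S k$ is $k$-cellular over $R$. In the settings where the lemma will be applied---notably the normalizations $\CBU \to \CBG$ and $\COUG \to \COBG$---the object $R \otimes_S k$ is the (co)chain spectrum of a mod-$p$ finite space (for instance $C^*(U/G)$ in the first case), whose homotopy is finite-dimensional over $k$; a dévissage along its finite Postnikov tower then writes it as finitely built from $k$ in $R$-modules, supplying the required vanishing. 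With both properties verified, uniqueness of the $k$-nullification triangle closes the argument, giving $\phi^* L_k M \simeq L_k \phi^* M$ and $\phi^* \Gamma_k M \simeq \Gamma_k \phi^* M$ in one stroke.
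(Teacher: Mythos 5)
Your argument is the paper's own: you check the universal property of $k$-nullification for $\phi^*\Gamma_kM \to \phi^*M \to \phi^*L_kM$, with the fibre in $\loc_S(k)$ because $\phi^*$ is exact, preserves coproducts and satisfies $\phi^*k=k$, and with the vanishing $[k,\phi^*L_kM]_S\cong[R\otimes_Sk,L_kM]_R=0$ reduced to $R\otimes_Sk$ being built from $k$ over $R$. The only divergence is at that last point: the paper disposes of it in one line (``since $R$ is built from $S$, $R\otimes_Sk$ is built from $k=S\otimes_Sk$''), whereas you verify it only in the intended applications (where $R\otimes_Sk$ is $C^*(U/G)$, $C_*(U_p)$, etc., hence finitely built from $k$); your caution at this step is reasonable, since it is exactly where the finiteness of the normalization enters rather than a purely formal property of $\phi$.
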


\begin{proof}
We need to observe that $\phi^*M\lra \phi^*L_kM$
has the unversal property of nullification of $k$. The fibre is 
$\phi^*\Gamma_kM$, and is built from $k=\phi^*k$, and 
$$[k, \phi^*L_kM]=[R\tensor_Sk, L_kM]=0; $$
since  $R$ is built from $S$, $R\tensor_Sk$ is built from $k=S\tensor_Sk$. 
\end{proof}

The key definition is simply stated. 
\begin{defn}
\label{defn:OmegaTate}
The {\em Tate localization} of $R$ is the ring  $L_kR$.
  
(i) When $R=\CBG$ we obtain the {\em classical Tate spectrum} 
$L_kR\simeq \hat{C}^*(BG)$, which is the fixed point
spectrum of the usual Tate construction (\cite{Tate, GMTate}). Its
homotopy is the classical Tate cohomology ring.

(ii) When $R=\COBG$ we obtain the {\em $\Omega$-Tate spectrum}
 $L_k\COBG$. Its homotopy is the $\Omega$-Tate homology ring. 

\end{defn}

Altogether, in our case this gives  a web of maps as follows.

$$\xymatrix{
 U/G \rto &  BG \rto & BU\\
&\hat{C}(BG) & \hat{C}^*(BU) \lto \\
C^*(U/G)  &\CBG \uto\lto& \CBU \lto_{q} \uto 
 \\
 \COUG  \rto^{\phi} \dto & \COBG \rto\dto   & \COBU   \\ 
L_k\COUG\rto &   \TCOBG  &  
}$$

We note that since $C^*(U/G)$ and $C_*(U)$ are finitely built from
$k$, they are annihilated by $L_k$. 

\begin{remark}
\label{rem:fgOmegaTate}
The object $k$ is not a module over $\TCOBG$, so there is no obvious
notion of Koszul duality for Tate localizations. Nonetheless, we will say that a
$\TCOBG$-module $X$ is fg if and only if $(L_k\phi)^* X$ is small over
$\TCOUG$, and we write $\Db (\TCOBG)$ for the full subcategory of the
homotopy category consisting of finitely generated modules. 
\end{remark}

  \section{Anderson-Tate duality for $L_k\COBG$}
\label{sec:AndOmega}
We show that the  same condition that gave 
control over the Tate localization gives 
a duality statement for $\COBG$ of the form familiar from 
the duality on Tate cohomology of finite groups. 

In this section  $G$ is a compact Lie group of dimension $d$.
As in Subsection \ref{subsec:GorD}
said it is  {\em orientable over $k$} if the action of the component
grou on $H_d(G)$ is trivial. The reader may restrict attention to the
case of a finite group, but the role of the dimension in the more
general case is illuminating.

\subsection{Classical Tate duality for finite groups}
\label{subsec:classicalTate}
  We restate the classical situation in our language as a template.  For a finite group $G$ we have the norm sequence
  $$C_*(BG)\stackrel{\nu}\lra \CBG\lra \ChatBG.$$
  The only degree in which $\nu$ may be non-zero is degree zero; since
  $G$ acts trivially on coefficient group $k$,  the norm is multiplication by the group
  order. If  $p$ does not divide the group order then it is an
  isomorphism and $1=0$ in Tate cohomology so 
  the Tate cohomology is zero. If $p$ does
  divide the group order $\nu_*=0$ and  we have   a short exact
  sequence
  $$0\lra H^*(BG)\lra \hat{H}^*(BG)\lra \Sigma H_*(BG)\lra 0.$$
Since homology and cohomology are dual, we obtain the Tate duality
statement
that the positive codegrees are dual to
  negative degrees with a shift
  $$\hat{H}^n(BG)=H^n(BG)=\Hom(H_{-n}(BG),
  k)=\Hom(\hat{H}_{-1-n}(BG),k), $$
  or
  $$\hat{H}^*(BG)\simeq \Sigma^{1}(\hat{H}^*(BG))^{\vee}.$$

  Similarly for a compact Lie group $G$ of dimension $d$ with $k$-orientable adjoint
  representation, where $C_*(BG)$ is replaced by $\Sigma^dC_*(BG)$; 
  if $d$ is positive $\nu_*$ is automatically zero and 
  $$\hat{H}^*(BG)\simeq \Sigma^{d+1}(\hat{H}^*(BG))^{\vee}.$$

  \subsection{Gorenstein duality for $\COBG$}
We follow the template of Subsection \ref{subsec:classicalTate} for
the Koszul dual ring. 
  By \cite[10.3]{DGI}, if the adjoint representation of $G$ is
$k$-orientable then   $ \CBG$ is Gorenstein of shift $d$ and has
Gorenstein duality of shift $d$, so that $\Gamma_k\CBG\simeq \Sigma^d C_*(BG)$.

The argument of \cite[8.5]{DGI} shows that $\COBG$ is also Gorenstein of
shift $d$, but to make Gorenstein {\em duality} statements we need
to calculate the homotopy of $\cell_k \COBG$.

  \begin{thm}
\label{thm:GorDCOBG}
    If $G$ the adjoint representation is $k$-orientable, then
    $$\cell_k\COBG \simeq \Sigma^d\CCOBG.$$
  \end{thm}

  \begin{proof}
  Since $R=\COBG$ is a Gorenstein $k$-algebra of shift $d$, we have
  $$\Hom_R(k,R)\simeq \Sigma^dk\simeq \Hom_R(k, \Sigma^d
  \Hom_k(R,k)). $$
   If $R'=\CBG$, the action on $\Hom_{R'}(k,R')$ is the
  action of $\pi_1(BG_p)$ on   $H^d(S^{ad(G)};k)$, which is trivial by
  hypothesis, and by the Morita invariance of the Gorenstein condition
  \cite[8.5]{DGI}, the action of $R=\COBG$ is the
  corresponding action.  Hence, the above composite isomorphism
  respects right $\cE$-actions, and by effective constructibility (Subsection \ref{subsec:effcon}), we 
conclude 
$$\cell_kR\simeq \Sigma^d\cell_k(R^{\vee})\simeq \Sigma^d R^{\vee}.$$
   \end{proof}

\subsection{Anderson-Tate duality for the Koszul dual}  
  In terms of ring spectra, if $R$ is an augmented $k$-algebra, we are
  taking the cofibre sequence
  $$\Gamma_kR\lra R\lra L_k R. $$
  If $R=\CBG$  and the adjoint representation is $k$-orientable then
  by Gorenstein duality,   the cellularization takes the expected form
  $$\Gamma_k \CBG \simeq \Sigma^{d}C_*(BG), $$
  where $d$ is the dimension of $G$. This recovers the discussion in
  Subsection \ref{subsec:classicalTate}.

  However, the ring  $R=\COBG$ again has Gorenstein duality of shift $d$,
  and hence
  $$\Gamma_k \COBG \simeq \Sigma^{d}\CCOBG. $$
  Again this gives a cofibre sequence
  $$\Sigma^d\CCOBG\stackrel{\nu}\lra \COBG\lra L_k\COBG, $$
  but now the suspension means that $\nu_*$ is potentially non-zero in
  degrees betwee $0$ and $d$. Furthermore the special case where $G=U$
  is a compact connected Lie group has $\Omega BU\simeq U$ finite and
  hence $L_kC_*(U)\simeq 0$ so that $\nu$ is an equivalence. We will
  study $\nu$ in more detail in Section \ref{sec:nu}.

  \begin{thm}
\label{thm:Lknorm}
Provided  the adjoint representation of $G$ is $k$-orientable
then
$$\pi_nL_k\COBG=\dichotomy
{H_n(\OBG) \mbox{ if } n\geq d+2}
{H^{d+1-n}(\OBG) \mbox{ if }  n\leq -1.}$$
In the remaining degrees there is an exact sequence 
$$H^{d-n}(\OBG)\lra H_n(\OBG)\lra   \pi_nL_k\COBG\lra 
H^{d+1-n}(\OBG)\lra H_{n-1}(\OBG), $$

The group $G$ is a $p$-compact group if and only if 
 $L_k\COBG\simeq 0$.
If $G$ is finite, it is $p$-nilpotent  if and only if 
 $L_k\COBG\simeq 0$.  If $G$ is a finite group and
not $p$-nilpotent then 
$$\pi_0(L_k\COBG)=k\Pi \oplus  H^1(\OBG) \mbox{ and }
\pi_1(L_k\COBG)=k\Pi \oplus  H_1(\OBG) , $$
where  $\Pi =G/O^p(G)$ is the largest $p$-quotient of $G$. 
\end{thm}

\begin{proof}
  After Theorem \ref{thm:GorDCOBG} the only things still requiring proofs concern when the norm map is  trivial.   It is proved by Bousfield and Kan that if $G$ is finite 
$\pi_1(BG_p^{\wedge})=G/O^p(G)$.

  It is clear that if $G$ is $p$-nilpotent 
  then $\COBG=C_*(G/O^p(G))$ is finite dimensional. The definition of
  $p$-compact is that $\COBG$ is finite dimensional. In any case, this
  means $L_k\COBG\simeq 0$.
  Conversely, if $L_k\COBG\simeq 0$ then $\COBG$ is $k$-cellular. By
  effective constructibility we have the equivalence
  $$\Sigma^d\CCOBG\simeq \Sigma^dk\tensor_{\CBG}k\simeq
  \Hom_{\COBG}(k, \COBG)\tensor_{\CBG}k \simeq \COBG. $$
In particular this implies $\HOBG$ is in a finite range, and of finite
total dimension, so $G$ is a $p$-compact group.

Otherwise if $L_k\COBG\not \simeq 0$ the norm map is not an
isomorphism.

Finally, we we must show that if $G$ is not $p$-nilpotent then the norm map
is zero.  We defer this to Section \ref{sec:nu}, and give an
alternative approach using Benson's squeezed
resolutions in Section \ref{sec:squeeze}. 
\end{proof}

\section{The norm}
\label{sec:nu}
In this section we continue with the cofibre sequence
$$\Gamma_k R\stackrel{\nu}\lra R\lra L_kR$$
in various cases, with  $R=C_*(G)$ or 
$\COBG$ with a view to understanding $\nu$.
Our principal concern  is for  a finite group $G$, where we give a
complete analysis, but we will also
consider more general compact Lie groups at the end.

The idea is that it is elementary to understand the map $\nu$ when
$R=C_*(G)$, and we can deduce substantial information from the
double centralizer map $C_*(G)\lra \COBG$.

\subsection{Context}
We suppose $G$ is a compact Lie group of dimension $d$ and the adjoint
representation is orientable over $k$.
The rings$R=C_*(BG), \CBG$ or  $\COBG$, are then Gorenstein of shift $d$, with 
$$\Hom_R(k, R)\simeq \Sigma^d k . $$
It is elementary that $\Hom_R(k, R^{\vee})\simeq k$. 
For $R=\CBG$ or $\COBG$ we have seen $R^{\vee}$ is $k$-cellular and 
$\Gamma_k R\simeq \Sigma^d R^{\vee}$. We will discuss the more elementary case of
$R=C_*(G)$ below. By definition the map 
$$ \Gamma_k R 
\stackrel{\nu}\lra R$$
is the one inducing a $k$-equivalence. We may understand the map $\nu$
since 
$$\Hom_R(\Gamma_kR , R) \simeq R_k^{\wedge}$$
If $R=\CBG $ or $\COBG$ then 
$ R_k^{\wedge}\simeq R $, so that $R_0=k$
and we find $\nu$ is determined by a scalar. 
It remains to identify the map 
$(\Sigma^d R_*)^{\vee}=[R, \Gamma_kR]\lra [R, R]=R_*.$

\subsection{The group ring of a finite group}
First we take $G$ to be finite and consider $R=C_*(G)=kG$. As before
$O^p(G)$ is the smallest normal subgroup so that $G/O^p(G)=\Pi$ is a
$p$-group. The group $G$ is said to be $p$-nilpotent if the Sylow
$p$-subgroup is a retract of $G$, or equivalently if $O^p(G)$ is a
$p'$-group.

At the level of abelian categories, we may consider modules $T$ which are $k$-cellular in the sense
that they have filtration with subquotients a sum of copies of the
trivial module $k$. The $k$-cellular obstructor of $M$ is the smallest
submodule of $M$ so that $M$ is $k$-cellular, and Benson observes
\cite[3.1]{BensonSqueezed} that the $k$-cellular obstructor is
$[O^p(G),M]=\{ m-gm \st m\in M, g\in O^p(G)\}$. The map
$M\lra M/[O^p(G), M]$ is thus an initial map to a $k$-cellular
module. Similarly the fixed point submodule $M^{O^p(G)}$ (which can be
viewed as a $k\Pi$-module) is the largest $k$-cellular submodule of
$M$, and the inclusion $M^{O^p(G)} \lra M$ is the terminal map from a
$k$-cellular submodule.

The counterpart of $\Gamma_k M\lra M \lra \Lambda_k M$ is therefore
$M^{O^p(G)}\lra M \lra M/[O^p(G),M]. $

In the following discussion, for a set $X$ of 
group elements we write $[X]=\Sigma_{x\in X}x$.

\begin{lemma}
  \label{lem:Opzero}
  The composite $(kG)^{O^p(G)}\lra kG\lra kG/[O^p(G), kG]=k\Pi$
  is an isomorphism if $G$ is $p$-nilpotent and zero otherwise.
\end{lemma}

\begin{proof} The module $\Gamma_k kG=kG^{O^p(G)}$ is spanned by coset sums. 
Since the action of $O^p(G)$ is trivialized in the codomain, the coset
sum $[xO^p(G)]$ is represented by  $|O^p(G)|x$.
The group $G$ is $p$-nilpotent if and only if  $O^p(G)$ is a $p'$-group. 
\end{proof}

\subsection{The $\Omega$-homology of a finite group}
Continuing with a  finite group, we consider  $R=\COBG$.
The idea is to deduce the answer from $R=C_*(G)$ using the fact that we
have maps
$$C_*(G)\lra \COBG\lra H_0(\Omega BG_p),  $$
where the first is the double centralizer map, and the second kills
homotopy in degrees 1 and above.  Since $G$ is finite this is
$$kG\lra \COBG \lra k\Pi. $$

\begin{lemma}
  \label{lem:nuzero}
 If $G$ is finite and  $R=\COBG $, the map $\nu_*$ is an isomorphism if 
 $G$ is $p$-nilpotent and otherwise is the zero map. 
 \end{lemma}

 \begin{proof} We only need to consider degree 0. 
The maps $kG\lra k\Pi =\COBG\lra k\Pi$,
show $\COBG$ is augmented over $k\Pi$, and in particular we have a
commutative square
$$\xymatrix{
  (k\Pi)^{\vee} \dto\rto^{\nu}&k\Pi\dto\\
  (\COBG)^{\vee} \rto^{\nu}&\COBG\\
}$$
Since the verticals are isomorphisms in degree 0, the result follows
from Lemma \ref{lem:Opzero}.
\end{proof}

\subsection{Chains on a connected compact Lie group}
We next consider the special case of a connected compact Lie groups
$G$. In this case $\Omega (BG_p)\simeq G_p$ and the double centralizer
map is an equivalence $C_*(G)\stackrel{\simeq} \lra \COBG$

\begin{lemma}
  If  $R=C_*(G)$ with $G$ connected, then  the map 
  $$\nu : \Sigma^d C^*(G_e)\lra C_*(G_e)$$
  is an isomorphism. 
\end{lemma}

\begin{remark}
It is natural to wonder if this is the Poincar\'e Duality
isomorphism. However our map is a map of $H_*(G)$-modules (which
makes sense because $G$ is a group), whereas the
Poincar\'e Duality isomorphism is a map of $H^*(G)$-modules (which
makes sense because $G$ is a space). The rings need not be isomorphic
(for example if $H_*(G)$ is not commutative). 
 \end{remark}

  \begin{proof}
Since $H_*(G)$ is finite dimensional and connected,  $L_kC_*(G)\simeq 0$.  
    \end{proof}

\subsection{Nilpotent action}
 Finally, we make a statement covering both the case of
 a finite group and the case of a connected group. It does not cover
all compact Lie groups, but it does suggest what the simplest behaviour looks
like.

 For a general compact Lie group $G$ we have a short exact sequence
 $$1\lra G_e\lra G\lra G_f\lra 1$$
 where $G_e$ is the identity component of $G$ and $G_f=G/G_e=\pi_0(G)$ is
 the finite quotient. This induces $k$-algebra maps
$$C_*(G_e)\lra C_*(G)\lra C_*(G_f)=kG_f.   $$

In general, if $G_f$ is  a $p'$-group  acting on $G_e$, we see
$H^*(BG)=H^*(BG_e)^{G_f}$. In this case $BG$ and $BG_e$ may be very
different: for example $H^*(BG_e)$ might be polynomial and $H^*(BG)$
might not even be a complete intersection. To avoid these
complexities, we  may suppose $G_f$ acts nilpotently on
$H^*(BG_e)$. In this case, $p$-completion preserves
the fibration and we obtain a $p$-adic fibration
$$ (BG_e)_p^{\wedge}\lra BG_p^{\wedge}\lra (BG_f)_p^{\wedge}. $$
This in turn gives a fibration
$$ (G_e)_p^{\wedge}\lra\Omega( BG_p^{\wedge})\lra \Omega
((BG_f)_p^{\wedge}). $$
In this case, the base is covered by our analysis of finite groups and
the fibre is covered by the case of connected compact Lie groups, and
we may hope  that the behaviour is a simple combination of the two.

\begin{lemma}
Suppose $G_f$ acts nilpotently on $H^*(BG_e)$.

(i) For $R=C_*(G)$, the map $\nu_*$ followed by $H_*(G)\lra \Lambda_k
H_*(G)$ is an isomorphism if $G_f$ is 
$p$-nilpotent and zero if $G_f$ is not $p$-nilpotent.

(ii) For $R=\COBG $, the map $\nu_*$ is an isomorphism if $G_f$ is 
$p$-nilpotent. If $G_f$ is not $p$-nilpotent, $\nu_*$ is  zero on the
submodule of $\Sigma^dH^*(\Omega BG_p)$
generated by the image of $H_*(G)^{O^p(G_f)}\cong H_*(G_e)[\Pi]$.
The image of $\nu_*$ maps to zero in $H_*(\Omega (BG_f)_p)$.    
\end{lemma}

\begin{remark}
If the spectral sequence collapses to an isomorphism 
$$\HOBG=H_*(G_e)\tensor H_*(\Omega (BG_f)_p)$$
and $G_f$ is not $p$-nilpotent,  then $\nu_*$ is zero. 
\end{remark}

    \begin{proof}
We know that $H_0(G)=kG_f$ and $H_i(G)=H_i(G_e)[G_f]$ (the action of
$G_f$ on $H_i(G_e)$ untwists in this induced module). The argument of
Lemma \ref{lem:Opzero} shows that the map 
$$\nu_*: H_*(G_e)[\Pi]=H_*(G_e)[G_f]^{O^p(G_f)}\lra H_*(G) \lra
H_*(G)/[O^p(G_f), H_*(G)]=H_*(G_e)[\Pi]$$
is multiplication by  $|O^p(G_f)|$, and it follows that the map is an
isomorphism if $G$ is $p$-nilpotent and 
zero if $G_f$ is not $p$-nilpotent.

For $R=\COBG$,  we see that $\nu$ is an equivalence if $G_f$ is
$p$-nilpotent (since $\HOBG$ is finite dimensional and built from the
trivial representation).  If $G$ is not $p$-nilpotent we deduce the
the statement from the comparison maps $C_*(G)\lra \COBG \lra
C_*(\Omega (BG_f)_p)$.
    \end{proof}


  \section{The squeezed Tate construction}
  \label{sec:squeeze}
In this section $G$ is a finite group, and we recall Benson's purely
algebraic approach to $\HOBG$, extending it slightly to include the
squeezed Tate construction. 
\subsection{Squeezed resolutions}
Benson \cite{BensonSqueezed} has shown how to calculate the homology
and cohomology of $\OBG$ in purely algebraic terms. We explain how to
incorporate the norm map $H^0(\OBG)\lra H_0(\OBG)$ into this framework
and then calculate.

An ordinary projective resolution of a module $A$ can be described as repeatedly
taking the kernel of a projective cover: we choose a surjective map
$P(A)\lra A$ from a projective module $P(A)$ and take the kernel
$$0\lra \Omega A \lra P(A)\lra A\lra 0. $$
The resolution is obtained by iterating the process: we take $A_0=A$
and then, once $A_i$ is defined, we take $P_i=P(A_i)$
and
$$A_{i+1}=\Omega A_i=\ker(P_i\lra A_i).$$

For the squeezed resolution there is another step. 
For a module $B$ the squeezed submodule $B_{\sigma}$  is the
smallest submodule so that $B/B_{\sigma}$ is built from the trivial 
module $k$ (we have $B_{\sigma}=[O^p(G), B]=\{ x-\gamma x\st \gamma \in 
O^p(G), x\in B\}$). The squeezed loops on $A$ is defined to be
$$\Omega_{\sigma}A=(\Omega A)_{\sigma}.$$
We now define a {\em left squeezed resolution} of a $kG$-module 
$A$ by taking $A_0=A$ and, once
$A_i$ is defined,  we take $P_i=P(A_i)$ and 
$A_{i+1}=\Omega_{\sigma}A_i$. In more detail, we define 
$P_i, B_i, A_{i+1}$ as follows 
$$A_i\lla P(A_i)=P_{i}\lla \Omega A_i=B_{i}\supseteq (B_{i})_{\sigma}=A_{i+1}. $$
This gives  a  {\em left squeezed resolution} 
$$\cdots \lra P_2\lra P_1\lra P_0\lra 0$$
and we  then define $\HO_*(G;A)=H_*(P_{\bullet})$. 

\begin{remark}
  If $A=k$ the left squeezed resolution coincides with that in
  \cite{BensonSqueezed}. We take $A_0=k$ and then
  $$A_0P_0B_0A_1P_1B_1\ldots$$
  whereas, in corresponding steps, Benson takes
  $$M_{-1} P_0N_0M_0P_1N_1\ldots $$
  Actually, Benson does not define
  $M_{-1}$, and begins by taking $P_0=N_0=P(k)$ and then takes
  $M_i=(N_i)_\sigma$, $P_{i+1}=P(M_i)$, $N_{i+1}=\Omega M_i$. 
 It is easy to see that the projectives $P_i$  in the two sequences agree, and
 corresponding terms in the two sequences agree from $A_1=M_0$ onwards, since
  $$P(k)_\sigma =(\Omega k)_\sigma. $$
\end{remark}

Similarly, if $D$ is a $kG$-module, we define $D^{\sigma}$ to be the
quotient of $D$ by the largest submodule built from $k$, and we take
$\Omega^{-1}_\sigma A=(\Omega^{-1}A)^\sigma$.
We then
define a {\em right  squeezed resolution} of a $kG$-module 
$C$ by taking $C_0=C$ and, once 
$C_i$ is defined,  we take  $I_i=I(C_i)$ (injective hull) and 
$C_{i-1}=\Omega_{\sigma}^{-1} C_i$. In more detail, we define 
$I_i, D_i, C_{i-1}$ as follows 
$$C_i\lra I(C_i)\lra \Omega^{-1}C_i=D_i\lra D_i^{\sigma}=C_{i-1}$$
This gives  a  {\em right squeezed resolution} 
$$0 \lra I_0\lra I_{-1}\lra I_{-2}\lra\cdots $$
 and then define $\HOC^*(G; C)=H^*(I_{\bullet})$.
 Once again, for analagous reasons to the left case,
 when $C=k$ the complex $I_{\bullet}$
 coincides with Benson's right squeezed resolution.

Benson has shown \cite[Theorem 1.2]{BensonSqueezed} that
$\HO_*(G;k)=\HOBG$ and $\HOC^*(G;k)=H^*(\OBG)$.

\subsection{The Tate squeezed resolution}

For the module $k$, we  splice the left and right squeezed resolutions
together to form a Tate resolution $T_{\bullet}$ by using a cofibre sequence 
$$I_{\bullet}\stackrel{n}\lra P_{\bullet}\lra T_{\bullet}. $$
Such a map $n$ is specified by an arbitrary map $n_0: I_0\lra P_0$,
and we take $I_0=P_0$ and $n_0$ to be the identity.
We then define the squeezed Tate homology by $\HOT_*(G)=H_*(T_{\bullet})$.

\begin{lemma} (Benson) 
  \label{lem:Benson}
Taking $n_0$ to be the identity gives the norm map
  $$n_*: \HOC^0(kG)\lra \HO_0(kG)$$
  given  by multiplication by $|O^pG|$ and is therefore an 
  isomorphism if $G$ is $p$-nilpotent and 0 if $G$ is not $p$-nilpotent. 
\end{lemma}
\begin{proof}
 By the comparison map \cite[3.4]{BensonSqueezed} we may take
 $I_0=P_0=kG$.  There is a commutative square 
  $$\xymatrix{
    kG\rto^{id}  & kG\dto \\
    kG^{O^p(G)}\uto&kG/[O^p(G),kG]\\
    \HOC^0(G)\ar@{=}[u]\rto^{n_*} & \HO_0 (G)\ar@{=}[u] 
  }$$
The statements about the maps were proved in Lemma \ref{lem:Opzero}. 
  \end{proof}

To tie this algebraic treatment to the homotopical treatment, we need
to show $n$ is $k$-cellularization. Benson gives a 
$H^{\Omega}_*(G)$-module structure by comparison of resolutions
\cite[4.6]{BensonSqueezed}. Because the arguments are formal
consequences of connectivity this is sufficient to establish the
result required. 

\begin{lemma}
The map $P_{\bullet}\lra T_{\bullet}$ is nullification of $k$, and
therefore the squeezed Tate homology is the $\Omega$-Tate homology:
$$\HOT_*(G)=\pi_*(L_k\COBG). $$
\end{lemma}

\begin{proof}
  To see the map is a $k$-equivalence we note that by the squeezing
  process
  $\Hom_{kG}(k, I_{\bullet})=I_0^G$ and  $\Hom_{kG}(k,
  P_{\bullet})=P_0^G$. To see that $I_{\bullet}$ is $k$-cellular, we
  need only note that since $H_*(I_{\bullet})$ is bounded above the
  process of killing homotopy groups shows
  $I_{\bullet}$  is the homotopy direct limit of bounded below
  complexes with $k$-cellular homology.
  \end{proof}

\section{The singularity category}
\label{sec:sing}
Now that we have all the ingredients, we recall the definition of the
singularity category of $\CBG$.

\subsection{The definition}
For a commutative local ring $R$ the singularity category is the
Verdier quotient
$$\Dsg (R):=\frac{\Db (R)}{\sfD^c (R)}$$
where the numerator (the bounded derived category) may be defined as 
the homotopy category of finite complexes of finitely generated 
modules, and the denominator (the derived category of compact objects) 
may be seen to be the homotopy category of finite complexes of 
finitely generated projectives. The point of the definition is that 
(by the Auslander-Buchsbaum-Serre Theorem) 
$\Dsg (R)$ is trivial if and only if the local ring $R$ is  regular. Its 
nontriviality therefore measures the deviation of $R$ from being 
regular. It has especially good formal properties if $R$ is 
Gorenstein.

We need to extend the definition of the singularity category to 
$R=\CBG$ in a way that respects Koszul duality. We described the 
symmetric Gorenstein context of \cite{Dsg} in Subsection \ref{subsec:symmGor}. 
\subsection{BGG correspondence}
\label{subsec:BGG}
We explained the definition of the bounded derived category in Section
\ref{sec:normsymm} we
may define the singularity category
$$\Dsg (R) :=\frac{\Db (R)}{\sfD^c (R)}   $$
when $R=\CBG$ or $\COBG$.

Alongside this,  we have the
definition of the dual {\em cosingularity} category
$$\Dcsg (R) :=\frac{\Db (R)}{\langle k \rangle}, $$
where the denominator is the thick subcategory generated by $k$.
Serre's Theorem establishes the importance of the invariant:  for a graded
connected $k$-algebra  $R$, it describes
quasicoherent sheaves over
$\mathrm{Proj}(R)$: $\Dcsg (R)\simeq \Db (\mathrm{Proj}(R))$.
We may thus think of the cosingularity category as very geometric in
flavour. 

The Morita equivalence of bounded derived categories exchanges compact
objects (finitely built from the ring) and objects finitely built from $k$.

\begin{thm}\cite[9.10]{Dsg}
\label{thm:BGG}
 Koszul duality induces the equivalences
$$\Dsg (\CBG)\simeq \Dcsg (\COBG) \mbox{ and }\Dsg (\COBG)\simeq \Dcsg
(\CBG). $$
\end{thm}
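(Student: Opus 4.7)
The plan is to derive both equivalences by restricting the Koszul duality equivalence $\Db (\CBG)\simeq \Db (\COBG)$ of Theorem 3.5 to matched pairs of thick subcategories on the two sides, and then passing to Verdier quotients.

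First I would track how the equivalence acts on the relevant generators. Implemented as $M\mapsto \Hom_{\CBG}(k,M)$ (viewed as an $\cE$-module with $\cE\simeq \COBG$), Koszul duality sends the residue field $k\in \Db (\CBG)$ to $\Hom_{\CBG}(k,k)\simeq \COBG \in \Db (\COBG)$, which is the free module of rank one. To compute the image of the other natural generator $\CBG$, I would invoke the Gorenstein property: since $\CBG$ is Gorenstein of shift $0$ (\S 2.D), we have $\Hom_{\CBG}(k,\CBG)\simeq k$. Because $\pi_1(BG_p)$ is a finite $p$-group, it acts trivially on $k$ and this identification upgrades from an equivalence of $\CBG$-modules to one of $\cE$-modules, so $\CBG$ corresponds to $k\in \Db (\COBG)$. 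Running the parallel computation on the other side (using that $\COBG$ is itself Gorenstein of shift $0$ by \cite[8.5]{DGI}) shows that under the inverse equivalence $\COBG$ corresponds to $k\in \Db (\CBG)$ and $k$ corresponds to $\CBG$.

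Since any triangulated equivalence carries the thick subcategory generated by an object to the thick subcategory generated by its image, these generator computations give
$$\sfD^c (\CBG)=\langle \CBG\rangle \;\simeq\; \langle k\rangle\subset \Db (\COBG) \quad\text{and}\quad \langle k \rangle\subset \Db (\CBG) \;\simeq\; \langle \COBG\rangle=\sfD^c (\COBG).$$
Taking Verdier quotients of $\Db (\CBG)\simeq \Db (\COBG)$ by these matched pairs on each side then yields both asserted equivalences in one stroke.

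The main obstacle is the delicate part of the first step: the identification $\Hom_{\CBG}(k,\CBG)\simeq k$ supplied by the Gorenstein condition must be promoted from an equivalence of $\CBG$-modules to one of $\cE$-modules, since only the latter structure is preserved by Koszul duality. This is precisely the unique-$\cE$-action condition used in \S 2.F to pass from the Gorenstein condition to Gorenstein duality; for finite $G$ it is automatic because $\pi_1(BG_p)$ is a finite $p$-group and admits only the trivial action on $k$. Once this is in hand, the translation of thick subcategories and the passage to quotients are formal.
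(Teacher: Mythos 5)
Your proposal is correct and follows essentially the same route as the paper, which cites \cite[9.10]{Dsg} and notes immediately beforehand that the Morita equivalence $\Db(\CBG)\simeq\Db(\COBG)$ exchanges compact objects with the (finite-dimensional) objects in $\langle k\rangle$, so the two Verdier quotients are identified. Your identification of the generators via $k\mapsto\cE\simeq\COBG$ and the Gorenstein-shift-$0$ equivalence $\Hom_{\CBG}(k,\CBG)\simeq k$, upgraded to $\cE$-modules by the unique-action argument (automatic since $\pi_1(BG_p^{\wedge})$ is a finite $p$-group), is exactly the mechanism underlying that exchange.
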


Note that it follows in particular that $\Dsg(\CBG)$ is trivial if and only if
$\CBG$ is regular. 

To see the relationship to  the BGG correspondence consider the 
second  when $G$ is an elementary abelian $2$-group of rank $r$: in
the light of Serre's Theorem it  shows the singularity category of an exterior algebra on
$r$-generators is the derived category of quasi-coherent sheaves on 
the projective space $\mathbb{P}^{r-1}$ in the familiar way.

  \section{Modules over the Tate localization}
  \label{sec:modovertate}
In this section we show that the singularity category of $\CBG$ is the
bounded derived category of the $\Omega$-Tate spectrum. The result
involves understanding the interactions of finiteness conditions, and
it is remarkable that the result holds for all finite groups $G$, and
all compact Lie groups for which the adjoint representation is
$k$-orientable. 

In the following proofs we have in mind a normalization 
$$U/G\lra BG\lra BU, $$
or rather its $p$-completion. The space $(U/G)_p$ is finite, the space
$BU_p$ has a finite loop space, and $BG_p$ itself is acquires finiteness
properties from this fibration.

\subsection{Truncations}

We first note that for the relevant spaces $X$, the homotopy $\pi_*(L_k\COX)$ is finite dimensional in each
degree and agrees with $\HOX$ above some degree. 

\begin{lemma}
\label{lem:degreewisefinite}
If $X$ is the $p$-completion of $U/G, BG$ or $BU$ then 
the homotopy groups $\pi_s( L_k\COX)$ are finite dimensional for each $s$
and $\pi_*(L_k\COX)$ coincides with  $\HOX$ above some degree. 
\end{lemma}

\begin{proof}
Since $\Omega BU_p $ is finite, this is obvious for $BU$. We have
shown this holds for $BG$ in Theorem \ref{thm:Lknorm} above, 
and it follows for $U/G$ from the other terms. 
\end{proof}

\begin{lemma}
  \label{lem:smalltruncationsofTCOUG}
  If $F$ is finite  then  for every $s\in \Z$ the truncation
  $\taus \TCOF$ of  $\TCOF$ is     small over $\COF$. 
\end{lemma}

\begin{proof}
By Lemma \ref{lem:degreewisefinite}, the truncation only differs
from $\TCOF$ in finitely many degrees. Since  each homotopy group of $\TCOF$ is finite 
dimensional over $k$ the overall difference is finitely built from
$k$.  Finally, since $F$ is
finite, $k$ is small over $\COF$ (indeed, $k$ finitely builds $C^*(F)$
and hence $\COF$ finitely builds $k$) and the truncation differs from $\TCOF$ by a small module. 
\end{proof}

\begin{lemma}
  \label{lem:smalltruncations}
      If  $F=U/G_p$ and $M$ is a small
      $\TCOF$-module then  for any $s\in \Z$ the  truncation $\taus M$
      is a small $\COF$-module. 
    \end{lemma}

    \begin{proof}
Every small object is finitely built from $\TCOF$, so we may prove 
the result by induction on the number of steps required. Lemma 
\ref{lem:smalltruncationsofTCOUG} gives the base of the induction.

We need to show that the property is 
preserved by  adding a single cell (since it is obviously preserved by 
passage to retracts). 

Suppose then that all truncations of $M$ are small, and that we have a 
cofibre sequence 
$$\Sigma^i\TCOF \lra M \lra M'.$$
The map $M\lra M'\lra M'(-\infty , -1]$ factors through $M\lra 
M(-\infty , -1]$ so we may form the diagram
$$\xymatrix{
  Z_1\rto \dto & \tauz M\rto \dto &\tauz M'\dto\\
  \Sigma^i \TCOF \rto \dto & M \rto \dto & M'\dto\\
  Z_2\rto & M(-\infty , -1]\rto & M'(-\infty , -1]. 
}$$
From the bottom row, the spectrum $Z_2$ has no homotopy in degrees
$\geq 0$. From the top row,  $Z_1$ only 
has homotopy in degrees $\geq -1$. The left hand vertical then shows
that  $Z_1$  differs from 
the truncation of $\Sigma^i \TCOF$ in a finite dimensional vector 
space.  By Lemma \ref{lem:degreewisefinite},  $\tauz \Sigma^i \TCOF$
differs from $\Sigma^i \COF$ in a finite dimensional vector space. Since $k$ is small over  
$\COF$ (as in Lemma \ref{lem:smalltruncationsofTCOUG}), it follows that $Z_1$ is small. From the top row we see 
that since $Z_1$ and $\tauz M$ are small, so is $\tauz M'$. 
\end{proof}

We apply the previous lemma with $F=U/G_p$ to the normalization
$\phi : \COUG\lra \COBG$. 

\begin{cor}
  \label{cor:fgtruncations}
If $N$ is a fg $\TCOBG$-module then every truncation of $N$ is a fg
$\COBG$-module. 
  \end{cor}

\begin{proof}
By hypothesis $M=\phi^*N$ is small over $L_k\COUG$. By Lemma
\ref{lem:smalltruncations}, 
all truncations $\taus M$ are small over $\COF$. However 
$\taus M=\taus \phi^*N=\phi^*\taus N$, so $\taus N$ is a fg $\COBG$-module as
required. 
\end{proof}

\subsection{The singularity category as a bounded derived category}
We are now equipped to outline the strategy for understanding $\Dsg
(\CBG)$. Theorem \ref{thm:BGG} shows $\Dsg (\CBG)\simeq \Dcsg
(\COBG)$, and we continue by showing this is the bounded derived
category of the $\Omega$-Tate spectrum $\TCOBG$.

\begin{thm}
  \label{thm:csgisbLk}
Extension of scalars along $\COBG\lra \TCOBG$ induces an equivalence 
$$\Dcsg (\COBG)\simeq \Db ( \TCOBG). $$
\end{thm}

  \begin{proof}
    At the level of unrestricted module categories,
    extension of scalars induces a functor $\COBG\modules\lra
\TCOBG\modules$. To see this induces a map on bounded derived
categories we need to show that an fg-module over $\COBG$ maps to an
fg-module over $\TCOBG$.

Thus we suppose given a fg $\COBG$-module $Z$, meaning that $\phi^*Z$ is
small over $\COUG$.

Since $L_k$ is smashing, the image of $Z$ in $\TCOBG\modules$ is
$L_kZ$, and $\phi^*L_kZ=L_k \phi^*Z$. Hence if $Z$ if fg we conclude
$L_kZ$ is fg as required.

This gives a map 
$$\nu: \Db  (\COBG)\lra \Db ( \TCOBG). $$
It is clear that $\nu (k)\simeq 0$ and since $L_k$ is exact, $\nu$ induces
$$\nubar: \Dcsg  (\COBG)=\Db  (\COBG)/\langle k \rangle
\lra \Db ( \TCOBG). $$
We must show this is full, faithful and essentially surjective. We
will first consider the map on objects.

{\bf Injectivity:}
To see that $\nubar$ is injective we note that the kernel of $L_k$ is
precisely the localizing subcategory generated by $k$. It remains to
say that if  $M$ is a $\COBG$-module with $M\simeq \Gamma_kM$ and it 
is small over $\COUG$ then $M$ is finitely built from $k$. Since $M$ is a retract of
$\COBG\tensor_{\COUG}M$ and $\COBG\tensor_{\COUG}k \simeq C_*(U)$ is
finitely built from $k$, it suffices to check that $M$ is finitely
built from $k$ as a $\COUG$-module. 

By hypothesis $M$ is finitely built from $\COUG$, and hence $\Gamma_k
M$ is finitely built from $\Gamma_k\COUG$. Since $\Gamma_k\COUG\simeq
C^*(\Omega U/G_p)$, and $H^*(\Omega U/G_p)$ is finite dimensional in each
degree, we see $\Gamma_k \COUG\simeq \colim_n \Gamma_kM^{(n)}$
with each term finitely built from $k$. 
Accordingly, the identity factors through  $\Gamma_kM^{(n)}$
for some $n$, and hence $\Gamma_k M$ is a retract of an object
finitely built from $k$. 

{\bf Surjectivity:} We suppose $N $ is a fg $\TCOBG$-module, so that 
$\phi^*N$ small over $\TCOUG$. We let $ Z=\tauz N$, and note that
$L_kZ\simeq N$ because $L_k$ annihilates any object bounded above. 

We claim that $Z$ is in fact finitely generated. Indeed,
$$L_k\phi^*Z=L_k\tauz \phi^*N\simeq L_k\phi^*\tauz N\simeq
L_k\phi^*N\simeq \phi^*N, $$
so the  localization of $\phi^*Z$ is small. Since $Z$ is the
truncation of $N$, the result follows from Corollary \ref{cor:fgtruncations}.
   
{\bf Morphisms:} 
Since $L_k$ is smashing, morphisms in the the localization of 
$\COBG$-modules are $L_k\COBG$-maps, and by definition the 
localization $L_k$ on $\COBG$-modules is the Verdier 
quotient by the localizing subcategory $\Loc(k)$, so 
$$\sfD(\COBG)/\Loc(k)\simeq \sfD(L_k\COBG).$$
Now consider the diagram 
$$\xymatrix{
\Db (\COBG)\rto \dto^{ff} &\Db (\COBG)/\langle k \rangle\dto \rto &\Db(L_k\COBG)\dto^{ff}\\
\sfD (\COBG)\rto &\sfD(\COBG)/\Loc(k) \rto^{\simeq} &\sfD (L_k\COBG)
}$$
with the outer verticals full and faithful by definition.  
We have argued that the part of $\Loc (k)$ in the 
bounded derived category is the thick subcategory $\langle k \rangle$,
so the central vertical is also full and faithful.

 \end{proof}

Putting results together we obtain a description of the singularity
category as a bounded derived category.

\begin{cor}
  \label{cor:DsgisDb}
We have  equivalences  
$$\Dsg (\CBG)\simeq \Dcsg (\COBG)\simeq \Db ( \TCOBG). $$
\end{cor}
\begin{proof}
  We combine Theorem \ref{thm:BGG}, and Theorem \ref{thm:csgisbLk}. 
  \end{proof}

\section{Central reduction}
\label{sec:centralred}
In the commutative setting (for example for modules over $\CBG$)
there is a well known construction of the
cellularization $\Gamma_k$ and the nullification $L_k$. The purpose of
this section is to give conditions under which these same
constructions work for the noncommutative ring $\COBG$.  The flavour
of the conditions is that $C^*(BG)$ behaves like a complete
intersection. The first requirement is that $\HOBG$ has polynomial
growth. For commutative local rings,  a growth condition is enough to
characterize complete intersections, but for $C^*(BG)$ we need a
little more. The actual condition is about Hochschild cohomology, but
it suffices to assume that $G$ has a normalization in which the
cohomology of the fibre is a complete intersection. This applies for example to  
Chevalley groups at good primes.

\subsection{Polynomial growth}  
To make this useful we need to understand the ring spectrum
$\TCOBG$. At the crudest level we want to understand its coefficient
ring, but we are working towards an understanding of its module
category. 

When $\HOBG$ is periodic with periodicity element $\tau$ (as in the
case of cyclic Sylow subgroup \cite{Dsgpq}), then
 we need only check that $\tau $ may be  taken central, and then as a
 module, the  Tate localization is just a mapping telescope, so that
$\TCOBG=\COBG[1/\tau]$. Furthermore we understand the terms in the
telescope, and the homotopy groups are clear $\pi_*(\COBG[1/\tau])=(\HOBG)[1/\tau]$. 

The general situation is more complicated. It is familiar
from commutative algebra that the singularity category behaves much
better for complete intersections.  In fact we can
make good progress here under a finiteness assumption directly analagous to
the growth condition that characterises complete intersections in
commutative algebra. 

We will use some results from the study of complete intersections
\cite{pzci}, starting with the growth condition.

\begin{defn} \cite{pzci}
A $p$-complete space $X$ is said to be {\em gci}  if $H_*(\Omega X)$ has
polynomial growth (the letter $g$ refers to the fact that this is a
growth condition). 
\end{defn}

It is essentially due to the work of Felix-Halperin-Thomas \cite{FHT} that this
finiteness condition gives good control over the structure of the homology.

\begin{lemma}
  \label{lem:finiteoverpoly}
  \cite[9.10]{pzci} 
Let $X$ be a Gorenstein gci space, then $H_*(\Omega X)$ is left and
right Noetherian and it is finitely generated over a central
polynomial subalgebra. 
\end{lemma}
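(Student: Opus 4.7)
The plan is to exploit the Hopf algebra structure on $A := H_*(\Omega X ; k)$, which arises because $\Omega X$ is a homotopy associative $H$-space, making $A$ a connected graded cocommutative Hopf algebra of finite type over $k$. The statement has two parts: (i) existence of a central polynomial subalgebra $P \subseteq A$ over which $A$ is finitely generated as a module, and (ii) the Noetherian conclusion. Part (ii) follows formally from part (i), since $P$ is Noetherian by Hilbert's basis theorem and a finitely generated module over a Noetherian central subring is Noetherian from both sides.

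The work is therefore to produce $P$. My approach is to invoke the structure theory of connected cocommutative Hopf algebras: over a field, $A$ is determined by its (restricted, in positive characteristic) Lie algebra of primitives $L_*$, and $A$ is built up from $L_*$ via a universal enveloping construction. The gci hypothesis says that $A$ has polynomial growth, which by the Felix--Halperin--Thomas analysis of loop space homology forces a very constrained structure on $L_*$: the even-degree part generates a polynomial algebra on finitely many generators, which sits centrally in $A$ (oddly-graded primitives squaring into the center in characteristic $2$). This yields the candidate central polynomial subalgebra $P$.

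It remains to verify that $A$ is a finitely generated $P$-module, and this is where the Gorenstein hypothesis intervenes. The Gorenstein property for $X$ translates, via Koszul duality and the machinery of Section~2, into a Poincar\'e-type duality statement for $A$ relative to the base change $A \tensor_P k$. This duality forces the quotient $A \tensor_P k$ to be finite dimensional, which by Nakayama-type arguments for connected graded Hopf algebras implies that $A$ is finitely generated over $P$.

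The main obstacle I anticipate is the Hopf-algebraic structure theorem in mixed characteristic: in characteristic zero the Milnor--Moore theorem makes the passage from polynomial growth of $A$ to finite generation of the Lie algebra of primitives transparent, but for mod $p$ coefficients, and especially at $p=2$, one must carefully account for the restriction operation and verify that the resulting even-degree subalgebra is genuinely polynomial and central. Accommodating these complications, and confirming that the Gorenstein duality for $X$ correctly produces the required finiteness of $A \tensor_P k$, is the core technical point that the argument in \cite{pzci} has to address.
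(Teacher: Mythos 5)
You are right to reduce the two-sided Noetherian statement to finite generation over a central polynomial subalgebra, and right that the relevant technology is the structure theory of connected cocommutative Hopf algebras; note that the paper itself gives no argument, quoting the result from \cite[9.10]{pzci}, whose proof rests on the elliptic Hopf algebra theorem of \cite{FHT}. However, your outline has a genuine gap, and it mislocates where the hypotheses do their work. The first asserted step --- that polynomial growth alone forces the primitives to be finitely generated and the even-degree part to generate a \emph{central} polynomial subalgebra $P$ --- is false at the Hopf-algebra level: the enveloping algebra of an evenly graded Heisenberg Lie algebra is a connected cocommutative Hopf algebra of finite type and polynomial growth whose even primitives are not central; worse, an exterior algebra on infinitely many primitive generators whose degrees grow geometrically (odd degrees if $p$ is odd) has only polynomially many basis elements in degrees $\leq n$, yet it is not Noetherian and is not finitely generated over any central polynomial subalgebra. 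So no candidate $P$ of the required kind can be extracted from the gci hypothesis by itself, and some space-level input must enter before this point. The second asserted step --- that Gorenstein duality for $X$ yields a ``Poincar\'e-type duality relative to $A\tensor_P k$'' forcing $A\tensor_P k$ to be finite dimensional --- is precisely the crux, and no mechanism is offered; your closing sentence concedes that this is the point the cited argument has to address, which means the proposal is a plan rather than a proof. (The harmless parts are fine: graded Nakayama does upgrade finite dimensionality of $A\tensor_P k$ to finite generation, and finite generation over a central Noetherian subalgebra does give left and right Noetherian.)

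For comparison with the intended argument: in \cite{pzci}, following \cite{FHT}, the Gorenstein hypothesis is not used as an afterthought to bound $A\tensor_P k$ for a $P$ built in advance; it is used to guarantee the depth-type finiteness hypothesis (nonvanishing of $\Ext_{H_*(\Omega X)}^{*}(k,H_*(\Omega X))$ in a finite range, coming from the Gorenstein condition on the (co)chains) under which the F\'elix--Halperin--Thomas theorem applies. It is the combination of finite depth with polynomial growth that their induction converts into the \emph{simultaneous} existence of a central polynomial subalgebra and finite generation over it; the infinitely generated exterior example above is exactly the kind of polynomial-growth Hopf algebra that the depth hypothesis excludes. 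To repair your write-up you would either have to reproduce that argument or make explicit how Gorensteinness of $X$ is transported to a usable finiteness statement about the Hopf algebra before any central subalgebra is chosen.
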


Since $X=BG_p$ is automatically Gorenstein, the first assumption is 
absolutely harmless. We will proceed on the assumption that $X=BG_p$
is a gci space. If $G$ is $p$-perfect, by Levi's
Dichotomy Theorem \cite{LeviDichotomy}, $\HOBG$ otherwise has at least
semi-exponential growth, and in that case  it is hard to even
understand what it means for a module to be finitely generated.

We will assume the growth is polynomial and
name the generators of the polynomial subring as follows.

\begin{assumption}
  $\HOX$ is finite over a central polynomial subalgebra $k[\tau_1, \ldots, 
\tau_s]$. 
\end{assumption}

\subsection{The commutative case}
To set up notation we recall the construction in the
commutative case. 

Given a commutative Noetherian ring $R$, and an element $\tau$ we
define the unstable Koszul complex $K_n(R)=(R\stackrel{\tau^n}\lra R)$
and  their direct limit,
the stable Koszul complex $K_{\infty} (R)=(R\lra R[1/\tau])$.

Now, given an ideal $I=(\tau_1,
\ldots, \tau_s)$ we may construct the stable Koszul complex
$$K_{\infty}(I)=K_{\infty}(\tau_1)\tensor_R \cdots \tensor_R
K_{\infty}(\tau_s). $$
One may check that up to homology isomorphism this only depends on the
radical of $I$ and for an $R$-module $N$ we may define the local
cohomology by  
$$H^*_I(R;N):=H^*(K_{\infty}(I)\tensor_R N). $$
Grothendieck showed that this calculates the right derived  functors
of the $I$-power torsion functor. Similarly we may omit the degree 0
copy of $R$ and regrade to get the Cech complex, which fits into a fibre sequence
$$K_{\infty}(R)\lra R \lra CH_I(R).$$
The Cech cohomology is defined by
$$CH^*_I(R;N)=CH_I(R)\tensor_R N, $$
and it is easy to see that it calculates the
right derived functors of global sections of $N$, viewed as a sheaf
over $\spec(R)\setminus V(I)$.

These constructions immediately adapt to the case that $R$ is a
commutative ring spectrum and $I$ is an ideal in the Noetherian ring
$R_*$. The stable Koszul complex is replaced
by the $R$-module $\fib (R\lra R[1/x])$. We write $\Gamma_I N:=K_{\infty}(R)\tensor_R
N$ and $L_IN=CH_I(R)\tensor_R N$. One may see that $\Gamma_IN$ is the
cellularization with respect to $K:=K_1(\tau_1, \ldots, \tau_s)$ and $L_IN$
is the $K$-nullification. 

The construction gives a natural
filtration giving rise to spectral sequences
$$H^*_I(R_*; N_*)\Rightarrow \pi_*(\Gamma_I N) \mbox{ and }
CH^*_I(R_*; N_*)\Rightarrow \pi_*(CH_I N). $$

When $K$ and $k$ generate the same localizing subcategory
we write $\Gamma_kN=\Gamma_IN$ and $L_kN=L_IN$. This
holds for instance if there is a proxy small map $R\lra k$
and $(R_*, I, k)$ is a local ring.

\subsection{Adapting to bimodules}
If  $\COX$ is finite as a  module over a commutative ring 
spectrum $R$ with a map $k[\tau_1, \ldots, \tau_s] \lra R_*$ then
by Lemma \ref{lem:cob}, the localization $\TCOX$, when considered as
an $R$-module can be constructed by a Cech complex. 

We now observe that we may adapt this to $R=\COX$ provided $\tau_1, \ldots , 
\tau_s$ may be realized by maps of $R$-bimodules. 

\begin{lemma}
  \label{lem:LkSS}
  Suppose $\COX$ is proxy small and  $\HOX$ is finite over a central polynomial subalgebra
  $k[\tau_1,\ldots, \tau_s]$. Provided $\tau_i\in \HOX$ is realized by a bimodule
  self-map of $\COX$, there are spectral sequences for calculating
  the homotopy of the $k$-cellularization $\Gamma_kM $ and
  $k$-nullification $L_kM$ of a $\COX$-module $M$ taking a familiar
  form. If $I=(\tau_1, \ldots, \tau_s)$ the spectral sequences are 
  $$H^*_I(\pi_*(M))\Rightarrow  \pi_*(\Gamma_k M)$$
  and 
  $$CH^*_I(\pi_*(M))\Rightarrow  \pi_*(L_k M). $$
\end{lemma}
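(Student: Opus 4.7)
The plan is to imitate the commutative construction of the local cohomology / Cech cohomology spectral sequences by building a stable Koszul bimodule and its Cech complement out of the given central elements, then identifying the resulting functors on left $\COX$-modules with the Bousfield cellularization $\Gamma_k$ and nullification $L_k$.

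\textbf{Setting up the Koszul bimodule.} Since each $\tau_i$ is realized by a $\COX$-bimodule self-map, I can form the mapping telescope $\COX[1/\tau_i]$ as a bimodule and the bimodule fibre $K_\infty(\tau_i) := \fibre(\COX \to \COX[1/\tau_i])$. Centrality in $\HOX$ gives that the iterated tensor products
\[
K_\infty(I) := K_\infty(\tau_1) \otimes_{\COX} \cdots \otimes_{\COX} K_\infty(\tau_s)
\]
are independent of order up to canonical bimodule equivalence, fitting into a cofibre sequence $K_\infty(I) \to \COX \to CH_I$. For a left module $M$, I set $\Gamma_I M := K_\infty(I) \otimes_{\COX} M$ and $L_I M := CH_I \otimes_{\COX} M$. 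The length-$s$ filtration of $K_\infty(I)$ (resp.\ $CH_I$) coming from the iterated cofibre structure has associated graded the Koszul, resp.\ Cech, complex on $\pi_*(M)$ with respect to $k[\tau_1,\ldots,\tau_s]$, and since $\HOX$ is finite over this central subring the resulting $E_2$ pages compute to $H^*_I(\pi_*(M))$ and $CH^*_I(\pi_*(M))$ just as in the commutative case.

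\textbf{Identifying with $\Gamma_k$ and $L_k$.} The substantive step is to show $\Gamma_I M \simeq \Gamma_k M$ and $L_I M \simeq L_k M$, for which I reduce to checking that the unstable Koszul object $K := K_1(\tau_1,\ldots,\tau_s)$ and $k$ generate the same localizing subcategory of left $\COX$-modules. The \textbf{fzp} hypothesis makes $\pi_*(K) = \HOX/(\tau_1,\ldots,\tau_s)$ finite dimensional, and combined with proxy-regularity of $\COX$ this gives that $K$ is finitely built from $k$. Conversely, finiteness of $\HOX$ over $k[\tau_1,\ldots,\tau_s]$ provides a finite free resolution of $k$ over this central subring; lifting it to $\COX$ expresses $k$ as a finite iterated extension of shifts of $K$. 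With the two localizing subcategories equal, the universal properties of $\Gamma_I M$ (built from $k$) and $L_I M$ (having $[k, -] = 0$, immediate since each $\tau_i$ acts by zero on $k$) complete the identification.

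\textbf{Main obstacle.} The main hurdle is the bimodule bookkeeping. Centrality of $\tau_i$ in $\HOX$ is only a homotopy-level statement, so the hypothesis that each $\tau_i$ lifts to a strict bimodule self-map is exactly what is needed to form the bimodules $K_\infty(\tau_i)$ and their iterated tensor products unambiguously; without this, even defining $K_\infty(I)$ as a bimodule would be delicate. Once this is in place the remainder follows from Lemma \ref{lem:cob} and the parallel commutative argument.
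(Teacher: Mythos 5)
Your overall construction is the same as the paper's: build the stable Koszul object $\Gamma_I\COX$ and its Cech complement as $(\COX,\COX)$-bimodules using the bimodule lifts $\hat\tau_i$, apply $-\tensor_{\COX}M$, read off the spectral sequences from the finite filtration, and identify the resulting triangle with $\Gamma_kM\to M\to L_kM$. However, one step in your identification is wrong: the claim that finiteness of $\HOX$ over $k[\tau_1,\ldots,\tau_s]$ lets you lift a finite free resolution of $k$ over the central subring to exhibit $k$ as a \emph{finite} iterated extension of shifts of $K=K_1(\tau_1,\ldots,\tau_s)$. The object $K$ is small over $\COX$ (it is finitely built from $\COX$ by construction), so anything finitely built from $K$ is small; but $k$ is small over $\COBG$ only when $\CBG$ is regular, since $\pi_*\Hom_{\COBG}(k,k)\cong H^*(BG)$ is infinite dimensional for any nontrivial finite group. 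So in exactly the cases of interest $k$ is \emph{not} finitely built from $K$, and the proposed lifting of a resolution over the central polynomial subring to the ring spectrum level has no justification. (A smaller misattribution: what makes $K$ finitely built from $k$ is not proxy-regularity but the fact that $\pi_*K$ is finite dimensional and $\COX$ is connective with $\pi_0$ a local ring with residue field $k$ --- this is exactly the paper's argument, carried out at the bimodule level.)

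Fortunately the flawed step is unnecessary, and the rest of your argument already contains what is needed, which is also how the paper proceeds: (i) $\Gamma_I\COX$ is built as a bimodule from $K$, whose homotopy is finite dimensional, so $\Gamma_IM=\Gamma_I\COX\tensor_{\COX}M$ is built from $k\tensor_{\COX}M$ and hence is $k$-cellular; (ii) $[k,L_IM]_*=0$, so $\Gamma_IM\to M$ is a $k$-colocal equivalence. For (ii) your parenthetical ("each $\tau_i$ acts by zero on $k$") is the right idea but needs to be run through the finite Cech filtration: the subquotients of $L_IM$ are localizations on which some $\tau_i$ acts invertibly, while by naturality of the bimodule action and the positivity of $|\tau_i|$ the induced self-map of $k$ is zero, forcing $[k,-]_*$ to vanish on each subquotient and hence on $L_IM$. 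With (i) and (ii) in place the universal property identifies $\Gamma_IM\simeq\Gamma_kM$ and $L_IM\simeq L_kM$, and the $E_2$-identification from the filtration is as you say; no comparison of localizing subcategories generated by $K$ and $k$ is required.
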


\begin{remark}
If $s=1$ this collapses to the familiar 
calculation 
$$\pi_*(L_kM)=M_*[1/\tau]. $$
\end{remark}

\begin{proof}
We repeat the construction with $R=\COX$ and  realize $\Gamma_IR$ as an
$(R,R)$-bimodule. It is built from $K=K_1(\tau_1, \ldots, \tau_s)$,
and  $\pi_*K$ is finite dimensional over $k$, and is therefore finitely
built from $k$ as a bimodule. Thus $\Gamma_kM=\Gamma_kR\tensor_R M$ is
built from $k\tensor_RR$. It follows that $\Gamma_kM$ is $k$-cellular,
and the map $\Gamma_kM \lra M$ is a $k$-colocal equivalence. 
\end{proof}

In the light of this, it is natural to make a definition. 
\begin{defn}
  We say that $\COX$ has a {\em central system of parameters} if 
  there are central elements $\tau_1, \ldots , \tau_s\in \HOX$ over which 
  $\HOX$ is finite and these are realized by bimodule maps 
  $\hat{\tau}_i:\COX\lra \COX$. 
\end{defn}

Thus if $\tau_1, \ldots , \tau_s$ may be represented by homotopically central 
elements of $\COBG$,  we have an equivalence of modules 
  $$\TCOX\simeq CH_{\tau_1, \ldots , \tau_s}(\COX) .$$

\subsection{Hochschild cohomology}

In order to show that polynomial generators $\tau_i$ may be chosen
homotopically central,  we need control over the Hochschild cohomology
 $HH^*(\COX)$. Since
$HH^*(\COX)\cong HH^*(C^*X)$ \cite[5.3]{Dsgpq} or \cite[1.10.1]{BensonDsg}, there are two obvious  spectral sequences for this: 
$$HH^*(H^*X)\Rightarrow HH^*(C^*X) \mbox{ and }
HH^*(\HOX)\Rightarrow HH^*(\COX).$$
In the second case the edge homomorphism is a map 
$$HH^*(C^*(X))=\pi_*(\Hom_{C^*X^e}(C^*X, C^*X))\lra 
\pi_*(\Hom_{C^*(X)}(k,k))\cong \HOX. $$

\begin{example}
  In the case that $G$ has a cyclic Sylow subgroup of order $p^n$ \cite{Dsgpq}
describes both spectral sequences. The differential  in the first occurs later (at the
$(p^n-1)$-page, whilst the second occurs precisely $(p^n-1)/q$ stages
earlier, where $2q$ is the degree of the polynomial generator). 

In that case the $E_2$-pages of the two spectral sequences are
isomorphic (though graded quite differently), and the differential is
on `the same' element (that corresponding to the exterior generator of
$\HOX$). 
\end{example}

\begin{defn}
  The ring $C^*(X)$ is said to be {\em Hochchild cohomology complete
    intersection (HHci)}  if the spectral sequence
  $HH^*(H^*X)\Rightarrow HH^*(C^*X)$ collapses at a finite stage. 
  \end{defn}

\begin{prop}
  \label{prop:collapsesurvive}
 If  the spectral sequence $HH^*(H^*X)\rightarrow HH^*(C^*X)$
 collapses at the $E_{r-2}$ page, then all $p^r$th powers are
 infinite cycles. 
\end{prop}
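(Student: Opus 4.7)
The plan is to prove by induction on $k\ge 1$ the sharper statement: for any class $a$ on the $E_2$-page, $a^{p^k}$ is a $d_s$-cycle for every $s$ in the range $2\le s\le k+1$. Applied with $k=r$, this gives that $a^{p^r}$ is a $d_s$-cycle for all $2\le s\le r+1$; combining with the collapse hypothesis, which forces $d_s=0$ for $s\ge r-2$, every differential vanishes on $a^{p^r}$ and it is an infinite cycle.

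The base case $k=1$ is Leibniz on $E_2$ combined with $p=0$ in $k$: $d_2(a^p)=p\,a^{p-1}d_2(a)=0$. For the inductive step, the inductive hypothesis implies $a^{p^k}$ survives to the page $E_{k+2}$, so the factorization $a^{p^{k+1}}=(a^{p^k})^p$ makes sense on each page $E_s$ with $s\le k+2$, and Leibniz gives $d_s(a^{p^{k+1}})=p\,(a^{p^k})^{p-1}d_s(a^{p^k})$ there. When $s\le k+1$ the factor $d_s(a^{p^k})$ is zero by the inductive hypothesis; at the critical new stage $s=k+2$, where $d_{k+2}(a^{p^k})$ may well be nonzero, the explicit factor of $p$ kills the whole expression. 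This is exactly the characteristic-$p$ phenomenon that lets each additional $p$-th power buy one more page of survival.

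What actually has to be verified to run this induction is that the spectral sequence $HH^*(H^*X)\Rightarrow HH^*(C^*X)$ is genuinely a spectral sequence of graded-commutative $k$-algebras with the Leibniz rule holding page by page, so that $a^{p^{k+1}}$ really is the product $(a^{p^k})^p$ in $E_s$ and the derivation formula applies. I expect this to be routine rather than the main obstacle: one equips $C^*X$ with a multiplicative filtration (for instance the Postnikov filtration), checks that the associated spectral sequence is multiplicative in the standard way, and identifies the resulting pairing on $E_2$ with the usual cup product on $HH^*(H^*X)$. Once this multiplicative bookkeeping is in place the rest is mechanical, and in particular no Massey-product or power-operation input is needed — the entire effect comes from the single factor of $p$ produced by Leibniz.
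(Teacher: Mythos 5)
Your argument is correct and is essentially the paper's own proof, which is just the one-line observation that in characteristic $p$ the Leibniz rule kills every differential on $p$th powers, so that collapse at $E_{r-2}$ forces $x^{p^r}$ to be an infinite cycle; your induction showing that each successive $p$th power survives one more page is exactly the bookkeeping implicit in that remark (and, like the paper, it takes the multiplicativity of the spectral sequence as standard). No gap to report.
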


\begin{proof}
In  characteristic $p$ every differential vanishes on $p$th powers by
the Leibniz rule, so that if the spectral sequence collapses at the
$E_{r-2}$-page,  $x^{p^r}$ survives for every element $x$. 
\end{proof}

A sufficient condition for the collapse of the Hochschild cohomology 
spectral sequence for cochains comes 
from the coefficient level. 
\begin{lemma}
  \label{lem:cciisHHci}
    If $H^*(F)$ is a complete intersection then the spectral sequence 
    $$HH^*(H^*(F))\Rightarrow HH^*(C^*F)$$
    collapses at a finite stage: cci implies HHci. 
    \end{lemma}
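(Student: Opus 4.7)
The plan is to combine the multiplicative structure of the spectral sequence with finite generation of its $E_2$-page coming from the complete-intersection hypothesis on $H^*F$. Write $H^*F \cong P/J$ with $P = k[x_1,\ldots,x_n]$ polynomial and $J = (f_1,\ldots,f_r)$ a regular sequence. The Tate resolution of $H^*F$ over $H^*F^e$ is built from $n$ exterior generators dual to the $x_i$ together with $r$ divided-power (polynomial in characteristic zero) generators dual to the $f_j$, so $HH^*(H^*F)$ is generated as an algebra over $H^*F$ by finitely many classes: the $x_i$ themselves in Hochschild cohomological degree $0$, together with classes $\xi_i$ and $t_j$ in cohomological degrees $1$ and $2$ respectively.

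The spectral sequence is multiplicative, so by the Leibniz rule the collapse statement reduces to showing that each of the finitely many algebra generators is a permanent cycle from some finite page on, with a uniform bound over the generating set. The $x_i$ lift tautologically to $\pi_*C^*F$ and are thus permanent cycles. For the $\xi_i$ and $t_j$, one uses the cci structure on $C^*F$ to lift the Tate-resolution generators to honest bimodule maps at the spectrum level, producing explicit Hochschild cocycles whose classes on $E_2$ are the $\xi_i$ and $t_j$; these are then permanent cycles as well. Taking the maximum of the pages at which each of the finitely many generators becomes a permanent cycle yields a uniform page beyond which all differentials vanish on generators, and Leibniz propagates this to all of $E_r$.

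The main obstacle is the lifting of the Tate-resolution generators to the cochain level --- in effect, realising the defining regular sequence $f_1,\ldots,f_r$ as a regular sequence of bimodule maps on $C^*F$. This is precisely where the complete-intersection hypothesis is genuinely used, and it calls on the structural theory of ci ring spectra developed in \cite{pzci}. Once the lifting is in hand, the finite generation of $E_2$ combined with Leibniz is routine, and the exact collapse page is then bounded in terms of $n$, $r$ and the complexity of the lifting data.
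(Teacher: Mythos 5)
Your argument hinges on a step that is not available: the ``lifting of the Tate-resolution generators to honest bimodule maps at the spectrum level''. The hypothesis of the lemma is purely about the coefficient ring $H^*(F)$ being a complete intersection; there is no given ci structure on the ring spectrum $C^*F$ itself, and nothing in \cite{pzci} rigidifies the defining regular sequence, or the classes $\xi_i, t_j\in HH^{1},HH^{2}$ of $H^*F$, to Hochschild cocycles for $C^*F$. Worse, this is essentially circular in the context of the paper: producing spectrum-level bimodule realizations of (powers of) such classes is exactly the content of Proposition \ref{prop:HHciiscsop}, which is \emph{deduced from} the HHci collapse that this lemma is meant to establish. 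Note also that if your lifting were available for every algebra generator, Leibniz would force collapse already at $E_2$ (every class a permanent cycle), which is stronger than the statement and not to be expected --- the analogous spectral sequence in the paper's cyclic-Sylow example carries genuine nonzero differentials, so one cannot hope to show all generators are permanent cycles; only a bound on which pages can carry differentials is needed or true in general.

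The paper's proof avoids any realization question and is purely a bidegree argument: by Buchweitz--Roberts, $HH^*(H^*F)$ is generated over $H^*F$ by classes in bidegrees $(-1,-|x_i|)$ and $(-2,-|f_i|)$, so, since $F$ is finite, the whole $E_2$-page is concentrated in a strip of bounded vertical width lying below a line of slope $1$; a differential $d_r$ moves bidegree in a fixed direction depending on $r$, so for all sufficiently large $r$ its source or target lies outside the strip and the spectral sequence collapses at a finite stage. Your observation that $E_2$ is finitely generated as an algebra is correct and in the same spirit, but finite generation plus Leibniz alone does not give collapse without either the (unproved, and too strong) permanent-cycle claim or the sparsity argument; as written the proposal has a genuine gap at its central step.
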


  \begin{proof}
If $H^*F=k[x_1, \ldots, x_c]/(f_1, \ldots , f_c)$ then the Hochschild 
cohomology $HH^*(H^*(F))$ is described explicitly by Buchweitz and 
Roberts \cite{BR}. We only need to know that there are generators in 
bidegrees $(-1, -|x_i|)$ and $(-2, -|f_i|)$. Since $F$ is finite this 
shows  that $HH^*(H^*F)$ is concentrated in 
a strip of vertical length equal to the dimension of $F$  and below a 
line of slope $1$. It follows that the spectral sequence 
$$HH^*(H^*(F))\Rightarrow HH^*(C^*F)$$
collapses at a finite stage. 
\end{proof}

  \subsection{Retreat to normalizations}

  We now suppose given a normalization of $C^*X$, which is a $p$-adic
  fibration  $F\lra X\lra Y$ with $Y$ regular and $F$ finite. We will
  show that we may obtain the required behaviour  by imposing conditions
  on $F$ rather than on $X$.
  
\begin{defn} We say that $X$ {\em has a normalization with Property P} 
  if  there  is a normalization $F\lra X\lra Y$ so that $F$ has 
  Property P. 

  The four properties P of interest are
  \begin{itemize}
    \item   having a central system of
  parameters ($C_*(\Omega F)$ has a central system of parameters),
   \item being Hochschild cohomology ci (HHci) (i.e., the spectral
  sequence $HH^*(H^*(F))\Rightarrow HH^*(C^*(F))$
  collapses at a finite stage,
  \item being coefficient ci (cci)
    (i.e., $H^*(F)$ is ci),   and
    \item being strongly spherical ci (ssci)
      (i.e.,  $H^*(F)$ is an exterior algebra).
    \end{itemize}
    It is clear that ssci implies cci, and Lemma \ref{lem:cciisHHci} shows cci
    implies HHci. 
  \end{defn}

  \begin{lemma}
    \label{lem:csopisSS}
  If $X$ has a normalization with a central system of parameters then as a module
  over $C_*(\Omega F)$,  $\TCOX$ has a Cech construction and there is
  a spectral sequence
  $$CH^*_k(\HOX)\Rightarrow \pi_*(\TCOX). $$
\end{lemma}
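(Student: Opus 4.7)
The plan is to transfer the $k$-nullification computation from $\cE = \COX$-modules to $\mcD = C_*(\Omega F)$-modules, where the central system of parameters on $F$ permits a direct application of Lemma \ref{lem:LkSS}.

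First I would apply Lemma \ref{lem:cob} to the normalization map $\phi : \mcD \to \cE$. This gives an equivalence $\phi^* \TCOX \simeq L_k \phi^* \COX$ in which the right-hand $L_k$ is the $k$-nullification inside $\mcD$-modules. Since $\phi^*$ is simply restriction of scalars, the underlying spectrum and hence the homotopy of $\TCOX$ are unchanged, so it suffices to compute the $k$-nullification of $\phi^*\COX$ regarded as a $\mcD$-module.

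Next, the assumption that $F$ has a central system of parameters is exactly the hypothesis of Lemma \ref{lem:LkSS} with $\mcD$ in place of $\COX$: $\HOF$ is finite over a central polynomial subalgebra $k[\tau_1, \ldots, \tau_s]$ whose generators are realised by $\mcD$-bimodule self-maps $\hat{\tau}_i$. That lemma then supplies, for any $\mcD$-module $N$, a Cech complex $CH_I(\mcD) \otimes_\mcD N$ modelling $L_k N$, together with a spectral sequence
$$CH^*_I(\pi_*(N)) \Longrightarrow \pi_*(L_k N), \qquad I = (\tau_1, \ldots, \tau_s) \subset \HOF.$$
Setting $N = \phi^*\COX$ and combining with the first step yields the Cech construction for $\TCOX$ and the claimed form of the spectral sequence, once one identifies $\pi_*(\phi^*\COX) = \HOX$ regarded as an $\HOF$-module via $\phi_*$.

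The main obstacle is the final identification of the $E_2$ page with $CH^*_k(\HOX)$ in the paper's notation: one needs to know that the Cech complex over $\HOF$, after extension of scalars to $\HOX$, computes Cech cohomology of $\HOX$ with respect to the ideal generated by the images of the $\tau_i$, and that this ideal has radical equal to the augmentation ideal of $\HOX$. This finiteness statement should follow from the normalization hypothesis — the fibre $\cE\otimes_\mcD k$ is $p$-finite — together with Noetherianness of $\HOF$ over the central polynomial subring supplied by Lemma \ref{lem:finiteoverpoly}. Once this bookkeeping is in place, convergence from Lemma \ref{lem:LkSS} closes the argument.
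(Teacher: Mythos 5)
Your proposal is correct and follows essentially the same route as the paper: restrict along the normalization $\phi\colon C_*(\Omega F)\to \COX$ (change of base as in Lemma \ref{lem:cob}), apply the Cech construction and spectral sequence of Lemma \ref{lem:LkSS} over $C_*(\Omega F)$ using its central system of parameters, and use the finiteness of the cofibre $\COX\otimes_{C_*(\Omega F)}k\simeq C_*\Omega Y$ (from regularity of $Y$) to see that $\HOX$ is finite over $H_*(\Omega F)$, so the relevant ideal is cofinal with the augmentation ideal and the $E_2$-term is $CH^*_k(\HOX)$. This matches the paper's argument in both the decomposition and the key lemmas invoked.
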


\begin{proof}
We have a  cofibre sequence 
$$C_*(\Omega F)\lra \COX \lra C_*\Omega Y,$$
and since $Y$ is regular, $\Omega Y$ is finite. In particular, 
$$C_*\Omega Y \simeq \COX\tensor_{C_*(\Omega F)} k$$
is finite dimensional, and hence $\COX$ is finitely built by 
$C_*(\Omega F)$.  This shows that some power of each $\tau_i\in \HOX$
lifts  to $H_*(\Omega F)$.

By the hypothesis on $F$, $\TCOX$ can be constructed as a $C_*(\Omega
F)$-module using a Cech construction and apply Lemma \ref{lem:LkSS}.
\end{proof}

\subsection{Conditions on cochains}
Finally, we show that a complete-intersection condition on $C^*X$ (cochains),
is sufficient to guarantee the existence of a central system of
parameters in $\COX$ (chains); this then gives
the required spectral sequence for $\TCOX$.

\begin{prop}
  \label{prop:HHciiscsop}
  If $C^*(X)$ has a normalization which is HHci then the normalization
  has a central system of parameters. 
\end{prop}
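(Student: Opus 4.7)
The plan is to produce central elements $\tau_1,\ldots,\tau_s\in\HOF$ over which $\HOF$ is finite, each lifting to a bimodule self-map $\hat{\tau}_i:\COF\to\COF$. A central polynomial subalgebra $k[\tau_1,\ldots,\tau_s]\subseteq\HOF$ with the required finiteness property is already available from the gci character of $F$ (Lemma~\ref{lem:finiteoverpoly}, using that a $p$-adically finite $F$ is automatically Gorenstein). The content of the proposition is to realize sufficiently high powers of the $\tau_i$ by honest bimodule maps.

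The natural receptacle is Hochschild cohomology. Using the Koszul identification $HH^*(\COF)\cong HH^*(C^*F)$ from \cite[5.3]{Dsgpq} together with the second spectral sequence $HH^*(\HOF)\Rightarrow HH^*(\COF)$, whose edge homomorphism lands in the center of $\HOF$, realizing $\tau_i$ by a bimodule self-map is equivalent to showing that $\tau_i\in HH^0(\HOF)=Z(\HOF)\subseteq E_2$ is an infinite cycle. In general this need not hold for $\tau_i$ itself, but it is enough to realize some $p^r$th power, because $\HOF$ is still finite over the $p^r$th-power subring $k[\tau_1^{p^r},\ldots,\tau_s^{p^r}]$.

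The HHci hypothesis enters precisely here. The $p$th-power Leibniz argument used to prove Proposition~\ref{prop:collapsesurvive} for the first spectral sequence applies equally to the second: in characteristic $p$ every differential in a graded-commutative spectral sequence vanishes on $p$th powers, so the $p^r$th power of any class is an infinite cycle provided the spectral sequence has no differentials beyond a bounded page. Since both spectral sequences converge to the common abutment $HH^*(C^*F)$, the finite-stage collapse of the first (the HHci hypothesis) forces the filtration on $HH^*(C^*F)$ to have bounded length, from which I can extract a uniform bound on the pages carrying nontrivial differentials in the second spectral sequence. With such an $r$ in hand, each $\tau_i^{p^r}$ is an infinite cycle, lifts to a class in $HH^*(\COF)$, and hence to a bimodule self-map of $\COF$ whose effect on homotopy is multiplication by $\tau_i^{p^r}$.

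The main obstacle is this comparison step: converting the HHci collapse of the first spectral sequence into an effective bound on the length of the second. A clean route is to compare the two spectral sequences through their common target, leveraging the explicit Buchweitz--Roberts-type description of $HH^*(H^*F)$ that underlies Lemma~\ref{lem:cciisHHci}, to see that the vertical extent of $HH^*(\HOF)$ contributing to $HH^*(C^*F)$ is controlled by the degrees appearing in the first $E_2$-page.
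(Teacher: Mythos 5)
Your reduction of the problem to realizing $p^r$th powers of the $\tau_i$ by bimodule self-maps is the right target, but the route you take to get there has a genuine gap. The HHci hypothesis concerns the \emph{first} spectral sequence $HH^*(H^*F)\Rightarrow HH^*(C^*F)$, while you work in the \emph{second} one, $HH^*(\HOF)\Rightarrow HH^*(\COF)$, and the crucial transfer step --- ``the finite-stage collapse of the first forces \ldots a uniform bound on the pages carrying nontrivial differentials in the second'' --- is unsupported. Collapse of a spectral sequence at a finite page does not bound the length of the filtration on the abutment, and even complete knowledge of the abutment does not constrain where differentials occur in a different spectral sequence converging to it, since the second filtration is independent data; the paper explicitly records the implication ``collapse of one at a finite stage implies collapse of the other'' as an open question in the remark following this proposition, so you cannot invoke it. Without a finite collapse page for the second spectral sequence, the Frobenius/Leibniz argument only promotes $\tau_i^{p^m}$ one page at a time, so no fixed power is guaranteed to be a permanent cycle. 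Your proposed fallback via the Buchweitz--Roberts description of $HH^*(H^*F)$ requires $H^*F$ to be a complete intersection, i.e.\ the cci condition, which by Lemma \ref{lem:cciisHHci} is strictly stronger than the HHci hypothesis of the statement, so it would prove a weaker proposition.

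The paper avoids the second spectral sequence altogether: it produces representatives of the $\tau_i$ directly in $HH^*(H^*F)$, the $E_2$-page of the spectral sequence that the HHci hypothesis controls, by choosing representatives in the $E_2$-term of the Eilenberg--Moore spectral sequence $\Ext_{H^*(F)}^{*,*}(k,k)\Rightarrow H_*(\Omega F)$ and pushing them along the natural map $\Ext_{H^*(F)}^{*,*}(k,k)\to HH^*(H^*(F))$. Proposition \ref{prop:collapsesurvive}, applied to that collapsing spectral sequence, then shows the $p^r$th powers survive to $HH^*(C^*F)\cong HH^*(\COF)$ and hence are realized by bimodule self-maps of $\COF$, over whose images $\HOF$ is still finite. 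If you want to salvage your outline, you would need to replace the comparison-of-spectral-sequences step by this (or some other) direct construction of classes in the $E_2$-page of the first spectral sequence.
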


\begin{proof}
  If $F\lra X\lra Y$ is the HHci normalization, we saw in Proposition
  \ref{prop:collapsesurvive} that   some powers of the classes $\tau_i\in \HOX$
  lift to $H_*(\Omega F)$, and we must show that some power of these
  survive to $HH^*(C^*(\Omega F))$.
  
  By Proposition \ref{prop:collapsesurvive}, it suffices to show
the  elements $\tau_i$ are represented in the Hochschild cohomology ring
  $HH^*(H^*F)$.

  In fact we have a spectral sequence $\Ext_{H^*(F)}^{*,*}(k, k)\Rightarrow
  H_*(\Omega F)$, and we may  choose representatives of $\tau_i$ in
  the $E_2$-term. Using the map
  $$\Ext_{H^*(F)}^{*,*}(k,k)\lra \Ext_{H^*(F \times F)}^{*,*}(H^*F,H^*F)=HH^*(H^*(F))$$
  we obtain elements $\tau_i$ in the $E_2$-term of the spectral
  sequence for $HH^*(C^*(F))$.
  \end{proof}

  \begin{remark}
It is natural to ask if the condition that $X$ is gci already implies
HHci, or indeed if gci is sufficient for  the two spectral sequences for Hochschild
cohomology to collapse. 

One may also ask if the collapse of one of the spectral
sequences  at a finite stage implies the other collapses at a finite
stage. 
\end{remark}

\subsection{Returning to groups}
We return to the special case $X=BG$. In this case it is natural to
focus on normalizations coming from group homomorphisms.

\begin{defn} We say that $G$ has a normalization with property P 
  if  there  is a group homomorphism $\rho :   G\lra U$ with $U$
  regular, so that $C^*(U/G)$ has property P. 

As before, the three properties P of interest are (in order of increasing 
strength) HHci, coefficient ci (cci), and strongly spherical ci (ssci). 
  \end{defn}

  \begin{cor}
    \label{cor:HHciiswc}
If $G$ has an HHci normalization then 
$\COBG$ is  finite over a weakly central polynomial subalgebra, and
hence the coefficients may be calculated by the spectral sequence of
Lemma \ref{lem:LkSS}.
\end{cor}

\begin{remark}
The group $U$ need not be a compact connected Lie group: we only need
it to be regular.  For example
if $k$ is of characteristic $p$ and $q|p-1$, we may consider a
non-trivial split extension  $G=C_{p^n}\sdr C_q$ (up to homotopy
equivalence this is the general case of a group with cyclic Sylow
$p$-subgroup as in \cite{Dsgpq}). In this case we may let $U=S^1\sdr
C_q$, a 1-dimensional compact Lie group which behaves like a
$(2q-1)$-dimensional compact connected Lie group.
\end{remark}

\section{Examples}
\label{sec:examples}
Methods in previous work have only applied to hypersurfaces. The
present paper 
shows that the class of groups they are
effective for is closed under products. Accordingly we can immediately generate
many examples not covered previously by using products of
s-hypersurfaces.

\begin{example} Let $G=C\sdr D$ with cyclic Sylow
$p$-subgroup $C=C_{p^n}$ and $D=C_q$ acting non-trivially. We can
express this as a strongly spherical
hypersurface via the fibration
$$T/C\lra BG\lra BU$$
where $BG=BC^{hD}$ and $BU=BT^{hD}$. This has
$$\HBG=k[X]\tensor \Lambda (T)
\mbox{ and }\HOBG=\Lambda [\xi ]\tensor k[\tau]$$
with
$$|X|=-2q, |T|=-2q+1, |\xi|=2q-1, |\tau|=2q-2.$$
\end{example}

\begin{example}
We can take $G=A_4$ with $p=2$ and $U=SO(3)$ and use the 2-adic
fibration
$$S^3\lra BA_4\lra BSO(3) . $$
\end{example}

Any product of of these examples give another ssci group, by virtue
of the fibration
$$S^{n_1}\times \cdots \times S^{n_c}\lra B(G_1\times\cdots \times
G_c)\lra B(U_1\times \cdots \times U_c).$$
One may then hope to construct  indecomposable examples from
these. For example, if $D$ is a group of order prime to $p$ and it acts  on
$U_1\times \cdots \times U_c$ preserving $G_1\times \cdots \times G_c$
in such a way that the action on $U_1/G_1 \times \cdots \times U_c/G_c$ is
trivial on homology,  then taking semidirect products gives another example
$$G=(G_1\times \cdots \times G_c)\sdr D, U=(U_1\times \cdots \times
U_c)\sdr D. $$

The following class of examples may seem more compelling. 
\begin{example}
 Suppose $\Gamma$ is a simply connected compact Lie group for which $p$ is not a
torsion prime (for example any group $SU(n)$ qualifies, and the
situation is summarised in \cite[Section 5]{BGformal}).  The
classifying space of the finite
Chevalley groups $G=\Gamma (q)$ for $q$ prime to $p$
fits into a $p$-adic homotopy pullback square
$$\xymatrix{
  BG \rto \dto &B\Gamma\dto\\
  B\Gamma \rto^(0.4){ \{1, \Psi^q\} } &B\Gamma \times B\Gamma
  }$$

Accordingly, there is a fibration $\Gamma \lra BG\lra
B\Gamma$. Since $\Gamma $ is connected and the fibre is finite, this
is a normalization, and since $p$ is not a torsion prime $H^*(\Gamma ;
\F_p)$ is exterior. This shows that $G$ is ssci and hence HHci.
\end{example}


\begin{thebibliography}{99}
\bibitem{BensonDsg} D.J.Benson
``Classifying spaces of finite groups of tame representation type.''
arXiv:2208.07913
\bibitem{BensonSqueezed} D.J.Benson
  ``An algebraic model for chains on $\Omega BG_p^{\wedge}$''
  TAMS {\bf 361} (2008) 2225-2242
\bibitem{BC} D.J.Benson and J.F.Carlson
``Functional equations for {P}oincar\'{e} series in group
              cohomology''
Bull. London Math. Soc. {\bf 26} (1994) 438-448
\bibitem{bg4} D.J.Benson and J.P.C.Greenlees 
``Duality in topology and modular representation theory.'' 
 JPAA  {\bf 212} (2008) 1716-1743 
\bibitem{Dsgpq} D.J.Benson and J.P.C.Greenlees 
``The singularity  and cosingularity categories for groups with cyclic Sylow subgroup'' 
Preprint (2021), 33pp, arXiv:2107.09389
\bibitem{BGformal} D.J.Benson and J.P.C.Greenlees ``Formality of
  cochains on $BG$'' Bulletin LMS {\bf 55} (2023) 1120-1128
\bibitem{BGgeneration} D.J.Benson and J.P.C.Greenlees
``Modules with finitely generated cohomology and singularities of 
$C^*(BG)$'' 
JPAA {\bf 228} (2024), Article 107667, arXiv:2305.08580
 \bibitem{pzci}
   D.J.Benson, J.P.C.Greenlees  and S.Shamir 
   ``Complete intersections and mod $p$ cochains.'' 
   AGT {\bf 13} (2013) 61-114, arXiv: 1104.4244
 \bibitem{BLOsqueezed}
   C. Broto, R.Levi and R.Oliver
``Loop space homology of a small category.''
Ann. K-Theory {\bf 6} (2021),  425–480.
\bibitem{BR}
  R.-O. Buchweitz and C.Roberts
  ``The multiplicative structure on Hochschild cohomology of a
  complete intersection''
  JPAA {\bf 219} (2015) 402-428
\bibitem{DwyerStrong}
W. G. Dwyer
``Strong convergence of the Eilenberg-Moore spectral sequence'', 
Topology 13 (1974), 255–265.
 \bibitem{tec}   W.G.Dwyer and J.P.C.Greenlees 
   ``The equivalence of categories of complete and torsion modules.'' 
   American J. Math. {\bf 124} (2002) 199-220 
\bibitem{DGI}
      W.G.Dwyer, J.P.C.Greenlees and S.B.Iyengar, Duality in algebra and topology, 
Advances in Maths. 200 (2006), 357-402. 
\bibitem{FHT} Y.F\'elix, S.Halperin, J.-C.Thomas `Elliptic Hopf
  algebras'
JLMS {\bf 43} (1991) 544-555
\bibitem{Tate}
  J.P.C.Greenlees 
  ``Representing Tate cohomology of G-spaces'', 
  Proceedings of the Edinburgh Mathematical Society 
  {\bf 30} (1987), 435-443
\bibitem{HICAbridges}
 J.P.C.Greenlees 
 ``Homotopy invariant commutative algebra over fields'' 
 Building Bridges Between Algebra and Topology, Birkh\"auser  (2018),
 103-169,  arXiv:1601.02473
 \bibitem{ringlct}
   J.P.C.Greenlees and G.Lyubeznik 
   ``Rings with a local cohomology theorem and applications to cohomology 
   rings of groups.''
   J. Pure and Applied Algebra {\bf 149} (2000) 267-285.
\bibitem{LI}
  J.P.C.Greenlees  and J.P. May 
  ``Derived functors of I-adic completion and local homology'' 
  Journal of Algebra 
  {\bf 149} (1992) 438-453. 
\bibitem{GMTate}
 J.P.C.Greenlees and J.P. May 
 ``Generalized  Tate cohomology'' 
 Memoirs of the American Maths. Soc., 
 {\bf 543} (1995) 178pp. 
 \bibitem{Dsg}
     J.P.C.Greenlees and G. Stevenson 
    ``Morita equivalences and singularity categories.'' 
    Advances in Mathematics  {\bf 365} (2020), 44pp, arXiv: 1702.07957 
    \bibitem{AdualGdual} J.P.C.Greenlees and V.Stojanoska 
``Anderson and Gorenstein duality'' 
(Geometric and topological aspects of group representations, Edited by 
J.Carlson, S.B.Iyengar and J.Pevtsova), Proceedings in Mathematics, 
(Springer-Verlag), 23pp,  arXiv: 1705.02664
\bibitem{LeviDichotomy} R.Levi
     ``On homological rate of growth and the homotopy type of
              $\Omega BG^\wedge_p$'' Math. Z. {\bf 226} (1997) 429-444
\end{thebibliography}
\end{document}